\title{On the length of arcs in labyrinth fractals}
\author{Ligia L. Cristea\thanks{L.L. Cristea is supported by the Austrian Science Fund (FWF), 
Project P27050-N26, by the Austrian Science Fund (FWF) Project F5508-N26, which is part of the Special Research Program ``Quasi-Monte Carlo Methods: Theory and Applications''. Part of this work was written while she was also supported by the Austrian-French cooperation project FWF I1136-N26} 
\\Karl-Franzens-Universit\"at Graz\\ Institut f\"ur Mathematik und Wissenschaftliches Rechnen
\\Heinrichstrasse 36, 8010 Graz,Austria\\ \tt{strublistea@gmail.com} 
 \and Gunther Leobacher\thanks{
G. Leobacher is supported by the Austrian Science Fund (FWF) Project F5508-N26, which is part of the Special Research Program ``Quasi-Monte Carlo Methods: Theory and Applications''. Part of this work was written when G. Leobacher worked at the Department of Financial Mathematics and Applied Number Theory at 
Johannes Kepler University Linz (JKU)
}\\Karl-Franzens-Universit\"at Graz\\ Institut f\"ur Mathematik und Wissenschaftliches Rechnen
\\Heinrichstrasse 36, 8010 Graz,Austria  \\ \tt{gunther.leobacher@uni-graz.at} }
\newtheorem{theorem}{Theorem}
\newtheorem{lemma}{Lemma}
\newtheorem{property}{Property}
\newtheorem{proposition}{Proposition}
\newtheorem{conjecture}{Conjecture}
\newcommand{\A}{\includegraphics{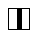}}
\newcommand{\B}{\includegraphics{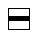}}
\newcommand{\C}{\includegraphics{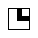}}
\newcommand{\D}{\includegraphics{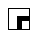}}
\newcommand{\E}{\includegraphics{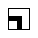}}
\newcommand{\F}{\includegraphics{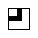}}
\definecolor{magenta}{rgb}{0.75,0,0.25}
\definecolor{violet}{rgb}{0.25,0,0.75}
\newcommand{\fr}{\mathrm{fr}}
\newcommand{\inter}{\mathrm{Int}}
\begin{document}

\maketitle

\textbf{Keywords:} fractal, dendrite, pattern, graph, tree, path length, arc length\\\\
\textbf{AMS Classification:}   28A80, 
05C38, 28A75, 51M25, 52A38
\abstract{Labyrinth fractals are self-similar dendrites in the unit square that are defined with the help of a labyrinth set or a labyrinth pattern. In the case when the fractal is generated by a horizontally and vertically blocked pattern, the arc between any two points in the fractal has infinite length \cite{laby_4x4, laby_oigemoan}.
In the case of mixed labyrinth fractals a sequence of labyrinth patterns is used in order to construct the dendrite. In the present article we focus on the length of the arcs between points of mixed labyrinth fractals.   
We show that, depending on the choice of the patterns in the sequence, both situations can occur:  the arc between any two points of the fractal has finite length, or the arc between any two points of the fractal has infinite length. This is in stark contrast to the self-similar case.}
\section{Introduction}\label{sec:introduction}

Labyrinth fractals are fractal dendrites in the plane, that can also be viewed as a special family of Sierpi\'nski carpets. Such carpets are not only studied by mathematicians, but also by physicists, e.g., as mathematical models for porous materials, rocks, or disordered media \cite{Tarafdar_modelporstructrepeatedSC2001, AnhHoffmanSeegerTarafdar2005}. The mathematical objects called \emph{labyrinth fractals} were introduced and studied by Cristea and Steinsky \cite{laby_4x4, laby_oigemoan, mixlaby}, on the one hand, and on the other hand in recent research in physics \cite{PotapovGrachev2012, GrachevPotapovGerman2013, PotapovGermanGrachev2013, PotapovZhang2016} objects called fractal labyrinths, strongly related to the labyrinth fractals, are used, as well as the labyrinth fractals mentioned above, including in \cite{GrachevPotapovGerman2013} the notation and mathematical frame introduced by Cristea and Steinsky \cite{laby_4x4, laby_oigemoan}. These fractal labyrinths and labyrinth fractals appear in physics in several different contexts. To the best of our knowledge, they first occured in the study of anomalous diffusion, and particle dynamics \cite{PotapovGrachev2012}. In \cite{GrachevPotapovGerman2013} they were used as a tool for processing and analysing planar nanostructures, while in \cite{PotapovGermanGrachev2013}the authors applied them in the context of fractal reconstruction of complex images, signals and radar backgrounds. In the very recent article \cite{PotapovZhang2016} it is shown how the benefits of the wide simulation abilities of labyrinth fractals were used in oder to create a software that generates  the shape of ultra-wide band fractal antennas, based on the geometry of labyrinth fractals, as introduced in \cite{laby_4x4}. Fractal antennas are already known to have applications, among others, in medicine, and cellular communications on base stations and mobile terminals, they have been of interest to scientists from the fields of physics and electronics for the last decade and still are a subject of ongoing research. 

In nature or in technics, objects that can be described by prefractals of labyrinth fractals occur in various situations: the system of blood or lymphatic vessels in the body of humans or animals, the leaf veins of plants, river systems, dendrites in the brain, the electrical discharches (e.g., lightening) on the one hand, and, on the other hand, systems of irrigation in agriculture, systems of ressources or information distribution, communication or transport networks. In the context of physics, a fractal labyrinth is defined \cite{PotapovGrachev2012} as ``a connected topological structure with fractal dimension greater than $1$ and with the scaling nature of the conducting channels''. 
Thus the labyrinth fractals defined by Cristea and Steinsky \cite{laby_4x4,laby_oigemoan, mixlaby} provide a
broad class of fractal labyrinths as described and used in physics and other
applied sciences, and which through their transparent construction method are
amenable to rigorous mathematical treatment. 
The results found for these mathematical objects have both potential and actual
applications in, and implications to, fields where their finite,``real''
counterparts occur, like, e.g., physics, material science, or life science.

\emph{Mixed labyrinth fractals} were introduced and studied in more recent work by Cristea and Steinsky \cite{mixlaby}. They are a generalisation of the self-similar labyrinth fractals introduced and studied by the same authors in previous work  \cite{laby_4x4, laby_oigemoan}.  In the case of mixed labyrinth fractals more that one pattern is used in order to construct the set,  as described in Section \ref{sec:Definition}. It has been proven \cite{mixlaby} that, when passing from the self-similar case to the generalised case of the mixed labyrinth sets and mixed labyrinth fractals, several of the topological properties are preserved: the mixed labyrinth fractals are dendrites in the unit square, too, that have exactly one exit on each side of  the unit square. In the self-similar case it was shown that special patterns, called blocked patterns, generate fractals that are dendrites with the property that the arc between any two points in the fractal has infinite length. 
\par
In the present article we show that in the case of mixed labyrinth fractals the situation is much more complex: on the one hand, one can find sequences of blocked labyrinth patterns that generate labyrinth fractals where the arc between any two points in the fractal is finite, and on the other hand one can find sequences of blocked labyrinth patterns whose resulting labyrinth fractal has the property that the arc between any two points of the fractal has infinite length. Moreover, we give an example for the construction of mixed labyrinth fractals where some arcs in the fractal have finite length and others have infinite length, analogous to the case when 
self-similar labyrinth fractals are generated by a pattern that is horizontally but not vertically, or vertically but not horizontally blocked  (see, e.g., \cite{laby_4x4}). Finally, we state a conjecture on lengths of arcs in mixed labyrith fractals, for future research.
\par
The results in this article provide ideas and modalities for constructing such fractal dendrites with desired properties regarding the lengths of arcs beween points in the fractal, that could serve as models, e.g., in the context of particle transport, nanostructures, image processing. We remark here that although there are several well known examples of continuous curves with infinite length, like the Peano curve \cite{Peano}, the Hilbert \cite{Hilbert} or the von Koch curve\cite{vonKoch1904, vonKoch1906}, not all of them have the property that the arc between any two points of the curve has infinite length, as in the case of  the arcs in some of the labyrinth fractals. Moreover, we note that random Koch curves, i.e. objects that are related, e.g., to arcs between certain points (exit points) in labyrinth fractals, are studied with respect to random walks by theoretical physicists in connection with diffusion processes, e.g. \cite{SeegerHoffmannEssex2009_randomKoch}. In this context we also mention diffusion processes of water in biological tissues. There are many more available examples that support the idea that labyrinth fractals, whether mixed or self-similar, are mathematical objects worth understanding with respect to their topological and geometrical properties, with benefits both in mathematics and in other sciences.

\section{Labyrinth fractals}\label{sec:Definition}

One way to construct labyrinth fractals is with the help of \emph{labyrinth patterns}.
Let $x,y,q\in [0,1]$ such that $Q=[x,x+q]\times [y,y+q]\subseteq [0,1]\times [0,1]$. 
For any point $(z_x,z_y)\in[0,1]\times [0,1]$ we define the function
$P_Q(z_x,z_y)=(q z_x+x,q z_y+y)$.

For any integer $m\ge 1$ let $S_{i,j,m}=\{(x,y)\mid \frac{i}{m}\le x \le \frac{i+1}{m} \mbox{ and } \frac{j}{m}\le y \le \frac{j+1}{m} \}$ and  
${\cal S}_m=\{S_{i,j,m}\mid 0\le i\le m-1 \mbox{ and } 0\le j\le m-1 \}$. 

Any nonempty ${\cal A} \subseteq {\cal S}_m$ is called an $m$-\emph{pattern} and $m$ its \emph{width}. Let $\{{\cal A}_k\}_{k=1}^{\infty}$ 
be a sequence of non-empty patterns and $\{m_k\}_{k=1}^{\infty}$ be the corresponding 
\emph{width-sequence}, i.e., for all $k\ge 1$ we have 
${\cal A}_k\subseteq {\cal S}_{m_k}$. 
We let $m(n)=\prod_{k=1}^n m_k$, for all $n \ge 1$. 
Let ${\cal W}_1={\cal A}_{1}$, we call ${\cal W}_1$ the 
\emph{set of white squares of level $1$}, and 
define ${\cal B}_1={\cal S}_{m_1} \setminus {\cal W}_1$ 
as the \emph{set of black squares of level $1$}. 
For $n\ge 2$ the \emph{set of white squares of level $n$} is defined as
\begin{equation} \label{eq:W_n}
{\cal W}_n=\bigcup_{W\in {\cal A}_{n}, W_{n-1}\in {\cal W}_{n-1}}\{ P_{W_{n-1}}(W)\}.
\end{equation}

 \noindent We remark that ${\cal W}_n\subset {\cal S}_{m(n)}$, and we define the \emph{set of black squares of level $n$} by ${\cal B}_n={\cal S}_{m(n)} \setminus {\cal W}_n$. For $n\ge 1$, we define $L_n=\bigcup_{W\in {\cal W}_n} W$. 
Thus, $\{L_n\}_{n=1}^{\infty}$ is a monotonically decreasing sequence of compact sets, and $L_{\infty}=\bigcap_{n=1}^{\infty}L_n$  is the \emph{limit set defined by the sequence of patterns 
$\{{\cal A}_k\}_{k=1}^{\infty}.$ }

Figures~\ref{fig:A1A2A3}, \ref{fig:W2}, and \ref{fig:pre_dendrite_general} show examples of labyrinth patterns and illustrate the first three steps of the 
construction of a mixed labyrinth set. 

We define, for ${\cal A}\subseteq {{\cal S}_m}$, $\mathcal{G}({\cal A})\equiv (\mathcal{V}(\mathcal{G}({\cal A})),\mathcal{E}(\mathcal{G}({\cal A})))$ to be \emph{the graph of ${\cal A}$}, i.e., the graph whose vertices $\mathcal{V}(\mathcal{G}({\cal A}))$ are the white squares in ${\cal A}$, i.e.,  $\mathcal{V}(\mathcal{G}({\cal A}))={\cal A}$ and whose edges $\mathcal{E}(\mathcal{G}({\cal A}))$ are the unordered pairs of white squares, that have a common side. The \emph{top row} in ${\cal A}$ is the set of all white squares in $\{S_{i,m-1,m}\mid 0\le i\le m^n-1 \}$. The bottom row, left column, and right column in ${\cal A}$  are defined analogously. A \emph{top exit} in ${\cal A}$ is a white square in the top row, such that there is a white square in the same column in the bottom row. A \emph{bottom exit} in ${\cal A}$  is defined analogously. A \emph{left exit} in ${\cal A}$ is a white square in the left column, such that there is a white square in the same row in the right column. A \emph{right exit} in ${\cal A}$ is defined analogously. 
One can of course define the above notions in the special case ${\cal A}={\cal W}_{n}$. In this case the top row (in ${\cal W}_{n}$) is called the 
\emph{top row of level} $n$. The \emph{bottom row, left column, and right column of level} $n$ are defined analogously.

\begin{figure}[hhhh]
\begin{center}
\includegraphics[width=0.6\textwidth]{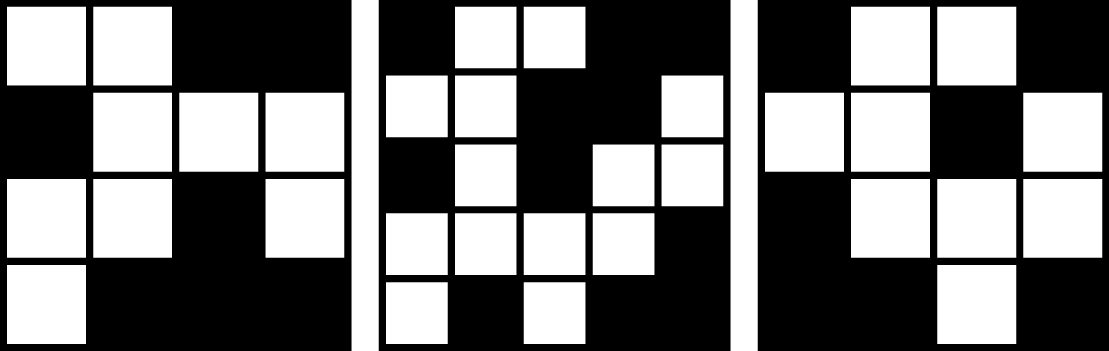}
\caption{Three labyrinth patterns, ${\cal A}_1$ (a $4$-pattern), ${\cal A}_2$ (a $5$-pattern), and ${\cal A}_3$ (a $4$-pattern)}
\label{fig:A1A2A3}
\end{center}
\end{figure}

\begin{figure}[hhhh]\label{W2}
\begin{center}
\includegraphics[width=0.3\textwidth]{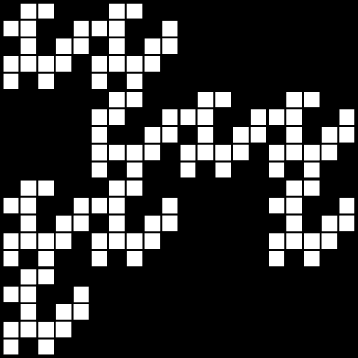}
\caption{The set ${\cal W}_2$, constructed based on the above patterns ${\cal A}_1$ and ${\cal A}_2$, that
can also be viewed as a $20$-pattern} \label{fig:W2}
\end{center}
\end{figure}

\begin{figure}[hhhh]
\begin{center}
\includegraphics[width=0.5\textwidth]{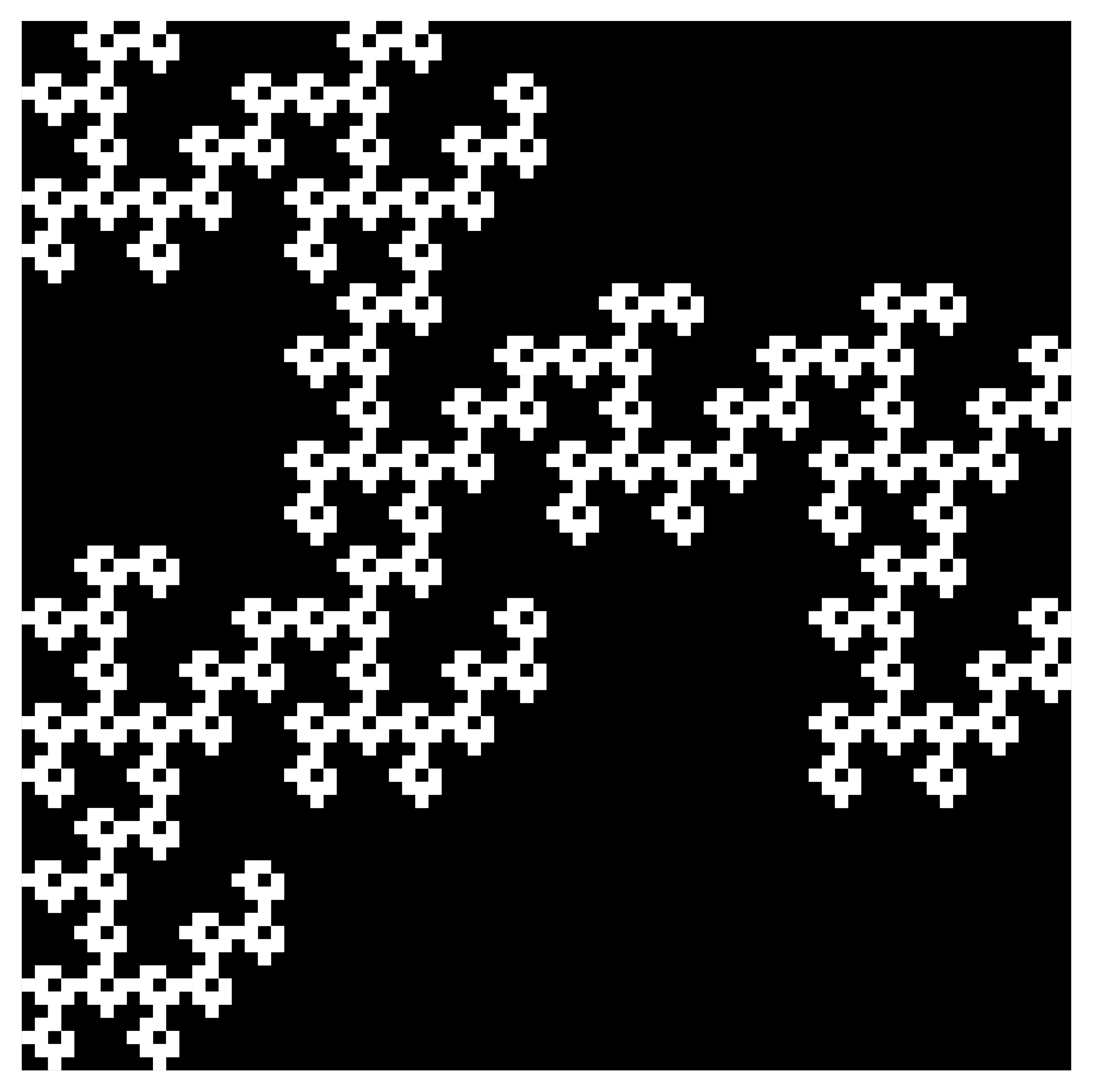}
\caption{A prefractal of the mixed labyrinth fractal defined by a sequence $\{{\cal A}_k\}$ where the first three patterns are ${\cal A}_1, {\cal A}_2, {\cal A}_3$, respectively, shown in Figure \ref{fig:A1A2A3}}  
\label{fig:pre_dendrite_general}
\end{center}
\end{figure}

A non-empty $m$-pattern ${\cal A} \subseteq {{\cal S}_m}$, $m \ge 3$ is called a $m\times m$-\emph{labyrinth pattern} (in short, \emph{labyrinth pattern}) if  ${\cal A}$ satisfies Property~\ref{prop1}, Property~\ref{prop2}, and Property~\ref{prop3}.
\begin{property}\label{prop1}
$\mathcal{G}({\cal A})$ is a tree.
\end{property}
\begin{property}\label{prop2}
Exactly one top exit in ${\cal A}$ 
lies in the top row, exactly one bottom exit lies in the bottom row, exactly one left exit lies in the left column, and exactly one right exit lies in the right column. 
\end{property}
\begin{property}\label{prop3}
If there is a white square in ${\cal A}$ at a corner of ${\cal A}$, then there is no white square in ${\cal A}$ at the diagonally opposite corner of ${\cal A}$. 
\end{property}

Let $\{{\cal A}_k\}_{k=1}^{\infty}$ be a sequence of non-empty patterns,  with $m_k\ge 3$, $n\ge 1$ and ${\cal W}_n$ the corresponding set of white squares of level $n$. We call ${\cal W}_{n}$ an $m(n)\times m(n)$-\emph{mixed labyrinth set} (in short, \emph{labyrinth set}), if ${\cal A} ={\cal W}_{n}$ satisfies Property~\ref{prop1}, Property~\ref{prop2}, and Property~\ref{prop3}.
It was shown \cite{mixlaby} that if all patterns in the sequence $\{{\cal A}_k\}_{k=1}^{\infty}$ are labyrinth patterns, then ${\cal W}_n$ is a labyrinth set, for any $n\ge 1$.
The limit set $L_{\infty}$ defined by a sequence $\{{\cal A}_k\}_{k=1}^{\infty}$ of labyrinth patterns is called \emph{mixed labyrinth fractal}. 

One can immediately see that in the special case when all patterns in the sequence $\{{\cal A}_k\}_{k=1}^{\infty}$ are identical, $L_{\infty}$ is a self-similar labyrinth fractal, as defined in \cite{laby_4x4, laby_oigemoan}.

In the following we introduce some more notation. For $n\ge 1$ and $W_1,W_2 \in \mathcal{V}(\mathcal{G}({\cal W}_{n}))$ we denote by $p_n(W_1,W_2)$ the path in $\mathcal{G}({\cal W}_{n})$ that connects $W_1$ and $W_2$.
A path in $\mathcal{G}({\cal W}_{n})$ is called $\A$\emph{-path} if it leads from the top to 
the bottom exit of $W_n$. 
The $\B,\C,\D,\E$, and $\F$\emph{-paths} lead from left to right, top to right, right to bottom, bottom to left, and left to 
top exit, respectively. 

Within a path in $\mathcal{G}({\cal W}_{n})$
 each white square in the path is denoted
according to its neighbours within the path: 
if it has a top and a bottom neighbour it is called $\A$-\emph{square}  
(with respect to the path), and
it is called $\B,\C,\D,\E$, and $\F$-\emph{square} if its neighbours are at left-right,
top-right, 
bottom-right, left-bottom, and left-top, respectively. 
If the considered square is an exit, it is supposed to have a neighbour outside the 
side of the exit. A bottom exit, e.g., is supposed to have a neighbour below, outside the bottom, additionally to its neighbour that lies inside the unit square.

For more details on labyrinth sets and mixed labyrinth fractals and for results on topological properties of mixed labyrinth fractals we refer to the paper \cite{mixlaby}.


\section{Existing results on arcs in mixed labyrinth fractals}\label{sec:old}

In this section we list some of the results obtained for  mixed labyrinth fractals \cite{mixlaby} that are useful in the context of this paper. We use the notation introduced in the previous section.

\begin{lemma}\label{lemma:Construction}(Arc Construction) Let $a,b\in L_{\infty}$, where $a\neq b$. For all $n \ge 1$, there are $W_n(a),W_n(b)\in V(\mathcal{G}({\cal W}_{n}))$ such that 
\begin{itemize}
\item[(a)]$W_1(a)\supseteq W_2(a)\supseteq\ldots$,
\item[(b)]$W_1(b)\supseteq W_2(b)\supseteq\ldots$,
\item[(c)]$\{a\}=\bigcap_{n=1}^{\infty}W_n(a)$,
\item[(d)]$\{b\}=\bigcap_{n=1}^{\infty}W_n(b)$.
\item[(e)]The set $\bigcap_{n=1}^{\infty}\left(\bigcup_{W\in p_n(W_n(a),W_n(b))} W\right)$ is an arc between $a$ and $b$. 
\end{itemize}
\end{lemma}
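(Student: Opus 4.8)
The plan is to construct, for each $n$, the vertices $W_n(a)$ and $W_n(b)$ from the nested sequences of white squares of successive levels that contain $a$ and $b$ respectively, and then to verify that property (e) — the fact that the intersection of the unions of the path squares is an arc — follows from the tree structure of each $\mathcal{G}({\cal W}_n)$ together with a compactness argument.

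\textbf{Construction of the vertices.} For a fixed point $a\in L_\infty$ and each $n\ge 1$, since $a\in L_n=\bigcup_{W\in{\cal W}_n}W$, there is at least one white square of level $n$ containing $a$. If $a$ lies in the interior of the union of white squares this square is unique; in general $a$ may lie on a shared side, so I would first observe that the set of white squares of level $n$ containing $a$ is nonempty and, by the nesting ${\cal W}_{n+1}\subset{\cal W}_n$ (in the refined grid ${\cal S}_{m(n+1)}$), one can choose them coherently: pick $W_{n+1}(a)$ among the level-$(n+1)$ white squares contained in $W_n(a)$ and containing $a$. A short argument — using that $\bigcap_n W_n(a)$ is a nested intersection of squares whose diameters $\sqrt2/m(n)\to 0$ — shows the intersection is the single point $\{a\}$, giving (a) and (c); (b) and (d) are identical with $b$ in place of $a$. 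The only subtlety here is making the choice consistent when $a$ (or $b$) lies on grid lines; this is handled by noting that once a level-$n$ square is fixed, at least one admissible level-$(n+1)$ child exists, so the sequence can be built inductively, and any such choice works for the rest of the argument.

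\textbf{Proving (e).} Write $A_n=\bigcup_{W\in p_n(W_n(a),W_n(b))}W$. The key structural fact is that $\mathcal{G}({\cal W}_n)$ is a tree (Property~\ref{prop1} holds for labyrinth sets), so the path $p_n(W_n(a),W_n(b))$ is the unique simple path between its endpoints; consequently $A_n$ is a connected union of finitely many closed squares whose adjacency graph is itself a path, hence $A_n$ is an arc (a homeomorphic image of $[0,1]$) — or, at the two ends, a slight enlargement of one if $W_n(a)=W_n(b)$. I would then show the sequence $\{A_n\}$ is decreasing: every level-$(n+1)$ white square lies inside a unique level-$n$ white square, and because $W_{n+1}(a)\subseteq W_n(a)$, $W_{n+1}(b)\subseteq W_n(b)$, the uniqueness of paths in a tree forces every square of $p_{n+1}(W_{n+1}(a),W_{n+1}(b))$ to sit inside a square of $p_n(W_n(a),W_n(b))$ — otherwise one could shortcut the level-$n$ path or produce a cycle. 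Hence $A_{n+1}\subseteq A_n$, and $A:=\bigcap_n A_n$ is a nonempty compact connected set containing $a$ and $b$.

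\textbf{From connected to arc.} It remains to upgrade "compact connected containing $a$ and $b$" to "arc between $a$ and $b$." Here I would invoke the fact, already established in \cite{mixlaby}, that $L_\infty$ is a dendrite: in a dendrite there is a unique arc between any two points, and any subcontinuum containing two points contains the arc between them. So it suffices to show $A$ contains no point off that unique arc — equivalently, that $A$ is itself an arc. I expect the cleanest route is a direct homeomorphism argument: parametrize each $A_n$ by $[0,1]$ with $a$-end at $0$ and $b$-end at $1$, chosen compatibly with the refinement (the parametrizations converge uniformly because the mesh $\to 0$), and let the limit map $\gamma:[0,1]\to A$ be the uniform limit; then $\gamma$ is continuous and surjective onto $A$, and injectivity follows because distinct parameters are separated at some finite level $n$ by disjoint-interior square-blocks along the path $p_n$. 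The main obstacle is exactly this last injectivity/parametrization bookkeeping — ensuring the level-$n$ parametrizations are consistent under refinement and that the limit is genuinely injective rather than merely a Peano-type surjection; the tree property of each $\mathcal{G}({\cal W}_n)$, which rules out the path revisiting a square and hence rules out "pinch points," is what makes this go through.

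\Proofbox
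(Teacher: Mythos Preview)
This lemma is not proved in the present paper; it is quoted from \cite{mixlaby}, and only the construction of the squares $W_n(a)$ is recalled immediately after the statement. Comparing your construction with that recalled one exposes a genuine gap.

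Your inductive step --- ``once a level-$n$ square is fixed, at least one admissible level-$(n+1)$ child exists'' --- can fail. Suppose $a$ lies on the common edge of two adjacent white squares $S_1,S_2\in{\cal W}_n$ and you have already committed to $W_n(a)=S_1$. When $S_1$ is subdivided according to ${\cal A}_{n+1}$, the square in its right column at the height of $a$ may well be black, while the square in the left column of $S_2$ at that same height is white. Then $a$ belongs to a white level-$(n+1)$ square inside $S_2$ (so $a\in L_{n+1}$ is not contradicted), but to no white level-$(n+1)$ square inside $S_1=W_n(a)$: your induction is stuck. The construction recalled in the paper avoids exactly this trap by a K\"onig/pigeonhole device: $W_n(a)$ is chosen not merely to contain $a$ but to contain \emph{infinitely many} of the white squares (of all levels) that contain $a$, and this forces an admissible child to exist at every stage.

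For (e), your assertion that ``$A_n$ is an arc (a homeomorphic image of $[0,1]$)'' is simply false: $A_n$ is a two-dimensional union of closed squares, not a Jordan arc. Consequently ``parametrize each $A_n$ by $[0,1]$'' has no clear meaning, and even if you replace it by a curve $\gamma_n$ threaded through $A_n$, the uniform limit $\gamma$ lands in $A$ but you give no reason why its image should be \emph{all} of $A$ --- which is precisely the point at issue. The monotonicity $A_{n+1}\subseteq A_n$ and the conclusion that $A=\bigcap_n A_n$ is a subcontinuum of the dendrite $L_\infty$ containing $a$ and $b$ are correct, and invoking the dendrite structure is the right instinct; but to finish you still owe an argument that $A$ contains no point off the unique arc from $a$ to $b$. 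One standard route is to exploit that each $A_n$ is a \emph{chain} of closed squares whose mesh $\sqrt2/m(n)$ tends to $0$: for any $x\in A\setminus\{a,b\}$ and all large $n$, the square of $p_n$ containing $x$ separates $a$ from $b$ in $A_n$, hence $x$ is a cut point of $A$; a metric continuum with exactly two non-cut points is an arc. Your sketch gestures toward this but does not supply it.
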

We recall from \cite{laby_4x4} that the squares $W_n(a)$, $W_n(b)$, $n\ge 1$ in the above lemma are chosen in the following way: let $W(a)$ be the set of all white squares in $\bigcup_{n=1}^{\infty} {\cal W}_n $ that contain $a$. Let $W_1(a)$ be a white square in ${\cal G} ({\cal W}_1)$ that contains infinitely  many white squares of $W(a)$ as a subset. For $n\ge 2$, we define $W_n(a)$ as a white square in ${\cal G}({\cal W}_n)$, such that $W_n(a)\subseteq W_{n-1}(a) $, and $W_n(a)$ contains infinitely many squares of $W(a)$ as a subset. $W_n(b)$, for $n\ge 1 $, is defined in the analogous manner.

\begin{proposition}\label{lemma:m^n} Let $n,k\ge 1$, $\{W_1,\ldots,W_k\}$ be a (shortest) path in 
$\mathcal{G}({\cal W}_{n})$ between the exits $W_1$ and $W_k$,  $K_0=W_1 \cap \fr([0,1]\times[0,1])$, $K_k=W_k \cap \fr([0,1]\times[0,1])$, where $\fr(\cdot)$ denotes the boundary of a set, and $c$ be a curve in $L_n$ 
from a point of $K_0$ to a point of $K_k$. The length of $c$ is at least $(k-1)/(2\cdot m(n))$.
\end{proposition}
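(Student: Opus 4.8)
The plan is to bound the length of $c$ from below by counting how many of the small squares $W_1, \ldots, W_k$ in the path the curve $c$ must traverse, and using the fact that each traversed square contributes at least a fixed amount to the length. First I would set up the geometric picture: each square $W_i$ in ${\cal W}_n$ has side length $1/m(n)$, and since $\{W_1, \ldots, W_k\}$ is a path in the tree $\mathcal{G}({\cal W}_n)$, consecutive squares $W_i$ and $W_{i+1}$ share a side, while non-consecutive squares are either disjoint or meet only at a corner (because the graph is a tree, there are no "shortcuts"). The curve $c$ starts at a point of $K_0 \subseteq W_1$ on the boundary of the unit square and ends at a point of $K_k \subseteq W_k$ on the boundary, and it stays inside $L_n = \bigcup_{W \in {\cal W}_n} W$.

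The key step is a connectivity/separation argument: because $L_\infty$ (and at finite level, the relevant structure) is a dendrite and $\mathcal{G}({\cal W}_n)$ is a tree, the only way to get from $W_1$ to $W_k$ within $L_n$ is to pass through every square on the unique path $W_1, \ldots, W_k$ — more precisely, through $W_1 \cap W_2$, then $W_2 \cap W_3$, and so on, in order. I would make this precise by noting that for each $i$, the shared edge $W_i \cap W_{i+1}$ is a "cut" separating $W_1 \cup \cdots \cup W_i$ from $W_{i+1} \cup \cdots \cup W_k$ inside $L_n$, so $c$ must cross each of these $k-1$ shared edges. Then, tracking the curve, between crossing the edge $W_{i-1}\cap W_i$ and crossing the edge $W_i \cap W_{i+1}$, the curve lies in (the closure of) $W_i$ and must travel from one side of this square to another; the two relevant sides are either opposite or adjacent, so the curve covers a Euclidean distance of at least the side length $1/m(n)$ in the "opposite" case and at least $1/m(n)$ still suffices as a crude bound, but to be safe one uses that crossing from any point on the boundary of a square to the shared edge with a neighbor, summed appropriately, telescopes. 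Summing the $\ell^1$-type displacements, or more simply observing that the curve visits $k$ squares and picks up length at least $\tfrac{1}{2}\cdot\tfrac{1}{m(n)}$ per square (the factor $1/2$ absorbing the adjacent-sides case where the curve might only need to cross "half" a square in a coordinate direction), gives total length at least $(k-1)/(2 m(n))$.

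I expect the main obstacle to be the careful bookkeeping of the $1/2$ factor: one must argue that even in the worst case — where the path makes a turn inside $W_i$ (a $\C$, $\D$, $\E$, or $\F$-square), so that the entry and exit edges are adjacent rather than opposite — the portion of $c$ inside $W_i$ has length at least $1/(2m(n))$, and that these portions, over $i = 1, \ldots, k$ (or $i=2,\dots,k$ with the endpoint squares handled separately since $c$ enters them from the unit-square boundary), are essentially disjoint so their lengths add. The cleanest way is probably to project onto a coordinate axis: parametrize $c$ and consider the total variation of one coordinate; each time the path in the graph moves in that coordinate direction (as opposed to the perpendicular one), $c$ must change that coordinate by at least $1/m(n)$, and roughly half of the $k-1$ steps go in each direction. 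Alternatively, one sums $\mathrm{length}(c) \ge \sum_{i} \mathrm{dist}(\text{center of entry edge of }W_i, \text{center of exit edge})$ and checks this sum is at least $(k-1)/(2m(n))$ directly. Either route reduces the proposition to elementary plane geometry once the topological "$c$ must cross every shared edge in order" claim is established, and that claim is immediate from the tree structure of $\mathcal{G}({\cal W}_n)$.
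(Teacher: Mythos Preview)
The paper does not actually give a proof of this proposition. It appears in Section~\ref{sec:old}, ``Existing results on arcs in mixed labyrinth fractals,'' where the authors explicitly state that they are merely listing results from \cite{mixlaby} that are used later; Proposition~\ref{lemma:m^n} is quoted without argument. So there is no proof in the present paper to compare your proposal against.

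As for the proposal itself: the overall strategy---use the tree structure of $\mathcal G(\mathcal W_n)$ to show that $c$ must cross each shared side $W_i\cap W_{i+1}$ in order, then add up the contributions---is the standard one and is sound. Two points deserve care if you flesh it out. First, the separation claim (``$c$ must cross every shared edge'') is not quite immediate from $\mathcal G(\mathcal W_n)$ being a tree: two white squares that are not edge-adjacent may still touch at a corner, so in principle $L_n$ could be connected across such a corner and allow the curve to bypass some $E_i$; one has to argue (as is done in the labyrinth-fractal papers) that this does not create a shortcut between the two components obtained by deleting an edge. Second, your ``alternative'' lower bound $\ell(c)\ge \sum_i \mathrm{dist}(\text{centre of entry edge},\text{centre of exit edge})$ is not valid as stated, since $c$ need not pass through the centres of the shared sides; the correct inequality uses the actual crossing points $p_i\in E_i$, and then one still has to extract the factor $(k-1)/(2m(n))$ from $\sum_i\lvert p_{i+1}-p_i\rvert$. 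Your coordinate-projection idea, phrased as bounding the total variation of the two coordinates of $c$ by the number of horizontal and vertical steps in the path, is the cleaner route and does give the stated bound after a short case check on consecutive turns.
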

Let $T_n\in {\cal W}_{n}$ be the top exit of ${\cal W}_{n}$, for $n\ge 1$. The \emph{top exit of} 
$L_{\infty}$ is $\bigcap_{n=1}^{\infty}T_n$. The other exits of $L_{\infty}$ are defined analogously. We note that 
Property~\ref{prop2} yields that $(x,1),(x,0)\in L_{\infty}$ if and only if $(x,1)$ is the top exit of $L_{\infty}$ and $(x,0)$ 
is the bottom exit of $L_{\infty}$. For the left and the right exit the analogous statement holds. 
Let $n\ge 1$, $W\in {\cal W}_{n}$, and $t$ be the intersection of $L_{\infty}$ with the top edge of $W$. 
Then we call $t$ the \emph{top exit} of $W$. Analogously we define the \emph{bottom exit}, the\emph{ left exit} and
the \emph{right exit} of $W$.
 We note that the uniqueness of each of these four exits is provided by the uniqueness of the four exits of a 
mixed labyrinth fractal and by the fact that each 
such set of the form 
$L_{\infty} \cap W$, where $W\in {\cal W}_{n}$, is a mixed labyrinth fractal scaled by the factor $m(n)$.
We note that we have now defined exits for 
three different types of objects, i.e., for ${\cal W}_{n}$ (and ${\cal A}_{k}$), for $L_{\infty}$, 
and for squares in ${\cal W}_{n}$.
\begin{proposition}\label{lemma:ArcSimilarity1} Let $e_1,e_2$ be two exits in $L_{\infty}$, and $W_n(e_1), 
W_n(e_2)$ be the exits in $\mathcal{G}({\cal W}_{n})$ of the same type as $e_1$ and $e_2$, respectively, 
for some $n\ge 1.$
If $a$ is the arc that connects $e_1$ and $e_2$ in $L_{\infty}$, $p$ is the path in 
$\mathcal{G}({\cal W}_{n})$ from $W_n(e_1)$ to $W_n(e_2)$, and $W\in {\cal W}_{n}$ is a $\A$-square 
with respect to $p$, then $W\cap a$ is an arc in $L_{\infty}$ between the top and the bottom exit of $W$. 
If $W$ is another type of square, the corresponding analogous statement 
holds.
\end{proposition}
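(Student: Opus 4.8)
The plan is to identify $W\cap a$ with the arc $a_W$ that joins the top exit $t_W$ and the bottom exit $b_W$ of $W$ inside the dendrite $L_\infty\cap W$; I will carry out the case in which $W$ is an $\A$-square with respect to $p$, the other square types needing only the obvious change of the relevant pair of sides. Since $L_\infty\cap W$ is a mixed labyrinth fractal scaled by $m(n)$, built from the tail sequence $\{{\cal A}_k\}_{k\ge n+1}$, Lemma~\ref{lemma:Construction} applied to it gives $a_W=\bigcap_{m>n}\bigcup_{V\in q_m}V$, where $q_m$ is the path in $\mathcal{G}({\cal W}_m)$ between the level-$m$ top exit square and the level-$m$ bottom exit square of $W$ (it stays inside $W$, since the white squares of level $m$ inside $W$ span a connected subtree of $\mathcal{G}({\cal W}_m)$). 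Likewise, taking for each $m$ the nested level-$m$ exit squares having the types of $e_1$ and $e_2$ as the distinguished squares in Lemma~\ref{lemma:Construction} — an admissible choice whose level-$n$ terms are the given $W_n(e_1),W_n(e_2)$ — part (e) shows that, with $p_m=p_m(W_m(e_1),W_m(e_2))$, the set $\bigcap_{m\ge1}\bigcup_{V\in p_m}V$ is an arc between $e_1$ and $e_2$ and so equals $a$. The task thus reduces to comparing the finite paths $p_m$ and $q_m$ for $m>n$.

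Next I would prove the combinatorial core: \emph{for each $m>n$ the white squares of level $m$ that lie on $p_m$ and are contained in $W$ form one contiguous block of $p_m$, and that block is $q_m$} (more generally, the block of $p_m$ inside each square of $p$ is the path dictated by that square's type along $p$). For the block structure, let $\pi$ send a white square of level $m$ to the white square of level $n$ containing it; consecutive squares of $p_m$ have $\pi$-images that are equal or adjacent, so deleting immediate repetitions from $\pi(p_m)$ leaves a walk in $\mathcal{G}({\cal W}_n)$ from $W_n(e_1)$ to $W_n(e_2)$. Here Property~\ref{prop1} for ${\cal W}_m$ is decisive: for distinct white squares $U,U'$ of level $n$, the subgraphs of $\mathcal{G}({\cal W}_m)$ induced on the level-$m$ white squares inside $U$ and inside $U'$ are connected (each is the graph of a labyrinth set), so $\mathcal{G}({\cal W}_m)$ has \emph{at most one} edge between them — two such edges would close a cycle. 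A backtracking step $U\to U'\to U$ in the reduced walk would therefore make the simple path $p_m$ traverse that unique $U$--$U'$ edge twice, which is impossible; hence the reduced walk is a simple walk in the tree $\mathcal{G}({\cal W}_n)$ from $W_n(e_1)$ to $W_n(e_2)$, i.e.\ it equals $p$. This yields one contiguous block per square of $p$ and, applied to levels $m$ and $m+1$, also $\bigcup_{V\in p_{m+1}}V\subseteq\bigcup_{V\in p_m}V$, so the sequence defining $a$ is decreasing.

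It then remains to identify the two ends of the block inside $W$ as the level-$m$ top and bottom exit squares of $W$. Such an end is either $W_m(e_i)$ itself, when $W=W_n(e_i)$ is an endpoint of $p$ (and then it is the level-$m$ exit square of $W$ on the $e_i$-side), or it is the $W$-side endpoint of the unique edge of $\mathcal{G}({\cal W}_m)$ across the side that $W$ shares with its neighbour in $p$ on that end; in the latter case one checks, by induction on $m$ using Property~\ref{prop2} for ${\cal A}_{n+1}$ and the subsequent patterns, that at each refinement step Property~\ref{prop2} confines the crossing to the exit column of the next pattern, so that this endpoint is again the level-$m$ exit square of $W$ on that side. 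As $W$ is an $\A$-square the two sides are the top and the bottom, so the block is $q_m$. Consequently $\bigcup_{V\in p_m}V\cap W$ is $\bigcup_{V\in q_m}V$ together with two line segments lying in $\fr(W)$ — the traces on $W$ of the blocks in the $p$-neighbours of $W$ — which shrink to $\{t_W\}$ and $\{b_W\}$ as $m\to\infty$. Intersecting over $m$, using the decrease of these unions and the topological properties of mixed labyrinth fractals from \cite{mixlaby} (notably that $L_\infty\cap W$ meets $\fr(W)$ exactly in its four exits and that adjacent white squares share their exit points), one obtains $W\cap a=\bigcap_{m>n}\bigcup_{V\in q_m}V=a_W$, an arc in $L_\infty$ between the top and the bottom exit of $W$.

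The step I expect to be the main obstacle is this last endpoint identification: it is the only point at which one leaves the interior of a single level-$n$ cell and must convert the purely graph-theoretic ``at most one crossing edge'' into the geometric fact that this edge sits at the prescribed exit square — which requires Property~\ref{prop2} together with the matching of the exits of adjacent cells, plus a careful treatment of the (lower-dimensional) boundary contributions. By contrast the ``at most one crossing edge'' statement, which carries most of the combinatorial load (no backtracking, hence the quotient of $p_m$ equals $p$, hence the block decomposition), follows at once from the tree property.
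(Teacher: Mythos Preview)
The paper does not give a proof of this proposition. It appears in Section~\ref{sec:old} (``Existing results on arcs in mixed labyrinth fractals''), where it is quoted from \cite{mixlaby}; the paper merely adds the remark that the self-similar analogue is treated in detail in \cite{laby_oigemoan}. So there is no argument here to compare yours against.

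On its own merits your argument is sound in outline and hits the right points: the tree property of $\mathcal{G}({\cal W}_m)$ forces the level-$n$ projection of $p_m$ to be the simple path $p$ (your ``no backtracking'' step is exactly the mechanism used in \cite{laby_oigemoan,mixlaby}), which gives the block decomposition; Property~\ref{prop2}, propagated through the refinements, pins each crossing edge to the exit square of the corresponding side; and the identification $W\cap a=a_W$ then follows by intersecting the nested unions. One small inaccuracy: the trace on $\fr(W)$ of the neighbouring block in $p_m$ need not be a single segment --- the path inside the neighbour may run along the shared side through several level-$m$ squares --- so you get a finite union of edge-segments rather than one. This does not harm the conclusion, since any limit point of these traces lies in $L_\infty\cap\fr(W)$, which by the results of \cite{mixlaby} consists of the four exits of $W$; connectedness of the arc and the $\A$-type of $W$ then rule out the left and right exits. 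Your own diagnosis that the endpoint identification (and the attendant boundary bookkeeping) is where the care is needed is accurate.
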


\noindent For the  corresponding results, in detail,  for self-similar fractals we refer to  \cite{laby_oigemoan}.

\section{Blocked labyrinth patterns, blocked labyrinth sets and a recent conjecture}
\label{sec:Blocked}

We recall that an $m\times m$-labyrinth pattern ${\cal A}$ is called \emph{horizontally blocked} if the row (of squares) 
from the left to the right exit contains at least one black square. It is called \emph{vertically blocked} if the 
column (of squares) from the top to the bottom exit contains at least one black square. Analogously we define for any 
$n \ge 1$ a horizontally or vertically blocked labyrinth set of level $n$.
As an example, the labyrinth patterns shown in Figure \ref{fig:A1A2A3} and \ref{fig:complement} 
are horizontally and vertically blocked, while those in Figure \ref{fig:counterexample_1} are not blocked.

\begin{figure}
\begin{center}
\includegraphics{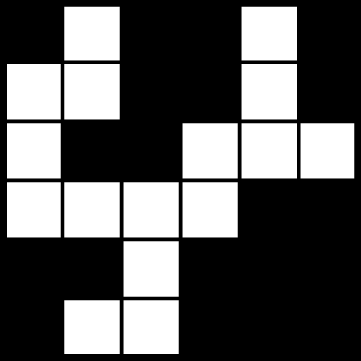}
\end{center}
\caption{A horizontally and vertically blocked ($6 \times 6$-labyrinth) pattern}\label{fig:complement}
\end{figure}

\begin{figure}[hhhh]
\begin{center}
\includegraphics[width=0.6\textwidth]{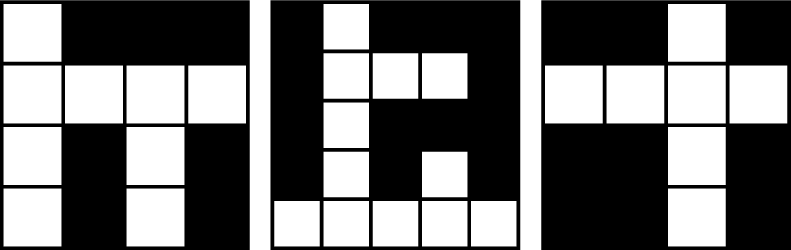}
\caption{Examples of labyrinth patterns, that are neither horizontally nor vertically blocked}
\label{fig:counterexample_1}
\end{center}
\end{figure}
In the self-similar case the following facts  were proven \cite[Theorem 3.18]{laby_oigemoan}:
\begin{theorem} Let $L_{\infty}$ be the (self-similar) labyrinth fractal generated by a horizontally and 
vertically blocked $m\times m$-labyrinth pattern. 
Between any two points in $L_{\infty}$ there is a unique arc ${a}$.
The length of ${a}$ is infinite.
The set of all points, at which no tangent to ${a}$ exists, 
is dense in ${a}$.
\end{theorem}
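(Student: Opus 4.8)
The plan is to handle the three assertions in order, reducing everything to the six ``inter-exit'' arcs of $L_\infty$. Uniqueness of the arc between two points is not the real content here: it follows from the fact that $L_\infty$ is a dendrite (already established in the self-similar setting). For the metric and differentiability claims, I would first observe, using Lemma~\ref{lemma:Construction} and the fact that every white square $W\in\mathcal W_n$ meets $L_\infty$ in a copy of $L_\infty$ scaled by $m^{-n}$, that any arc $a$ between two distinct points $a_0,b_0$ contains, as a sub-arc, a full ``through-arc'' of some white square $W$ of some level $n$, i.e.\ an arc of one of the six types $A$--$F$ joining two exits of $W$ (the two points must separate at some level, and at the first such level the connecting path has at least two squares, so some traversed square contributes a through-arc). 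Hence it suffices to prove that each of the six inter-exit arcs of $L_\infty$ has infinite length and a dense set of tangent-free points.

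Next I would set up the self-similar length equations. Let $\ell_A,\dots,\ell_F$ be the lengths of the six inter-exit arcs. By the self-similar version of Proposition~\ref{lemma:ArcSimilarity1}, the $X$-arc decomposes, according to the unique $X$-path in the generating pattern $\mathcal A$, into one scaled copy of an arc-type per square of that path, so $m\,\ell_X=\sum_{W\in X\text{-path}}\ell_{\mathrm{type}(W)}$. Each $\ell_X$ is positive (the two exit points of each type are distinct — this uses Property~\ref{prop3}) and a priori could be $+\infty$. Because the pattern is vertically (resp.\ horizontally) blocked, the top-to-bottom column (resp.\ the left-to-right row) contains a black square, so the unique $A$-path (resp.\ $B$-path) cannot be a straight lane and must detour, giving $N_A,N_B\ge m+2$, where $N_X$ denotes the number of squares on the $X$-path. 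Writing $\ell_{\min}=\min_Y\ell_Y$ and feeding $\ell_Y\ge\ell_{\min}$ into all six equations yields $m\,\ell_{\min}\ge N_{\min}\,\ell_{\min}$ with $N_{\min}=\min_X N_X$; provided $N_{\min}>m$ for every horizontally and vertically blocked labyrinth pattern, $\ell_{\min}$ cannot be both finite and positive, so $\ell_{\min}=+\infty$, and then every inter-exit arc is infinite, so by the reduction above every arc between two points of $L_\infty$ has infinite length. Alternatively one gets infinite length directly from Proposition~\ref{lemma:m^n}: the arc between two exits of $L_\infty$ contains, for each $n$, a curve in $L_n$ joining boundary points of the two exit squares of $\mathcal W_n$ along a path of $k_n$ squares, hence has length at least $(k_n-1)/(2m^n)$, and blocking forces $k_n/m^n\to\infty$.

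The main obstacle is precisely the combinatorial claim that in a horizontally and vertically blocked labyrinth pattern \emph{every} inter-exit path has more than $m$ squares. It is immediate for the $A$- and $B$-paths, but the types $C$--$F$ need not be straight lanes — a priori such a path could be a short monotone staircase of at most $m$ squares — so one has to exploit the planarity of the grid graph together with Properties~\ref{prop1}--\ref{prop3} and the blocking hypothesis (e.g.\ that a black square in the $T$--$B$ column and one in the $L$--$R$ row force a path from the top edge to the right edge to wind, so it cannot be a minimal staircase; a careful case analysis on the positions of the four exits and the tree structure should close this). If some type resists a direct count, the fallback is to argue on the full $6\times6$ nonnegative matrix $M=(M_{XY})$, $M_{XY}=$ (number of $Y$-squares on the $X$-path): the relation $\vec\ell=m^{-1}M\vec\ell$ makes $m$ an eigenvalue of $M$ with a nonnegative eigenvector, and a Perron--Frobenius analysis — using that the rows $A,B$ have sum $\ge m+2$ and that in the dependency digraph of $M$ every type can reach $A$ or $B$ — forces any nonnegative solution with a positive component to have $+\infty$ entries, i.e.\ $\ell_X=+\infty$ for all $X$.

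Finally, for the tangent statement I would argue that tangent-free points are dense as follows. Every sub-arc of $a$ is again an arc between two points of $L_\infty$, hence also of infinite length; so, given $p\in a$ and $\varepsilon>0$, arbitrarily close to $p$ the arc still ``fills'' a white square $W$ of high level, inside which the blocked self-similar picture recurs at every finer scale. Choose a point $q$ on $a\cap W$ lying on a side shared by two white squares of some deep level through which $a$ passes; such joint points are dense in $a$ because the mesh of $\bigcup_n\mathcal W_n$ shrinks to $0$. Near $q$, the arc enters through the fractal in one of these squares and leaves through the fractal in the adjacent one, and inside each, by self-similarity and the blocking-forced winding, it turns by an angle bounded away from $0$ infinitely often as it approaches $q$ (on both sides of $q$ along the arc it meets scaled inter-exit arcs that force such turns), so the secant directions at $q$ do not converge and no tangent exists there. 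The delicate point of this last step is quantifying the oscillation — showing the secant slopes genuinely fail to converge rather than merely that $a$ is non-rectifiable at $q$ — which once more is reduced to the structure of blocked labyrinth patterns.
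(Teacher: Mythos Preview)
This theorem is not proven in the present paper; it is quoted as \cite[Theorem~3.18]{laby_oigemoan} and serves only as background for the mixed case treated afterwards. There is therefore no proof here against which to compare your proposal.

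On the merits of your sketch: the overall architecture --- reduce an arbitrary arc to a through-arc of some white square via Lemma~\ref{lemma:Construction}, then analyse the six inter-exit arcs through the self-similar relation $m\,\vec\ell=M\vec\ell$ and Proposition~\ref{lemma:m^n} --- is indeed the framework of \cite{laby_4x4,laby_oigemoan}. However, your principal combinatorial claim, that in a horizontally and vertically blocked pattern \emph{every} inter-exit path has more than $m$ squares, is false. Place the top and bottom exits in the rightmost column and the right exit in row $m-2$; Property~\ref{prop3} then only forces two corners to be black, and the blocking black square in that column may lie below row $m-2$. The top-to-right path is then the two-square vertical segment in the top-right corner, so $N_{\min}=2\ll m$, and your inequality $m\,\ell_{\min}\ge N_{\min}\,\ell_{\min}$ gives nothing. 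Your Perron--Frobenius fallback points in the right direction --- in the example just described that short path consists of one $\A$-square and one $\C$-square, so the $\C$-equation does couple back to $\ell_A$ --- but mere reachability of the $A$- or $B$-type in the dependency digraph is not by itself enough to force the eigenvalue carried by a positive eigenvector above $m$. One needs a genuine quantitative statement about $M$ (equivalently, about the growth of the row sums of $M^n$ relative to $m^n$), and that is precisely the technical core carried out in \cite{laby_oigemoan}, which your sketch does not supply. The tangent argument is likewise too impressionistic: ``turns by an angle bounded away from $0$ infinitely often'' does not on its own prevent secant directions from converging, so a sharper argument is required there as well.
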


For the case of mixed labyrinth fractals, Cristea and Steinsky \cite{mixlaby} recently formulated the following conjecture. 

\begin{conjecture}\label{conj:main result}
 Let $\{{\cal A}_k\}_{k=1}^{\infty}$ be a sequence of both horizontally and vertically blocked labyrinth patterns,  
 $m_k\ge 4$. 
 For any two points in the limit set $L_{\infty}$ the length of the arc $a\subset L_{\infty}$ that connects them is infinite and the set of all points, where no tangent to $a$ exists, is dense in $a$.
 \end{conjecture}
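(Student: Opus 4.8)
\medskip
\noindent\emph{A proposed line of attack for Conjecture~\ref{conj:main result}.}
The plan is to reduce the statement to a claim about arcs joining two of the four exits of (scaled copies of) mixed labyrinth fractals, and then to bound the length of such exit-to-exit arcs from below by combining Proposition~\ref{lemma:m^n} with the substitution structure of the sets $\mathcal{W}_n$. A first, purely combinatorial observation is that in any labyrinth set the top and the bottom exit lie in the same column, and the left and the right exit in the same row: if $(c,m-1)$ is the top exit then $(c,0)$ is white, hence is \emph{the} bottom exit by Property~\ref{prop2}. Consequently the four exits of $L_\infty$, and of every $W\in\mathcal{W}_n$, are pairwise distinct as \emph{points}; for instance, the top and the right exit coinciding would force, already at level $1$, the two diagonally opposite corner squares $(m_1-1,0)$ and $(0,m_1-1)$ of $\mathcal{A}_1$ to be white, contradicting Property~\ref{prop3}. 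Using Lemma~\ref{lemma:Construction} and the method of Proposition~\ref{lemma:ArcSimilarity1}, one then shows that the arc $a$ between two arbitrary distinct points $p,q\in L_\infty$ contains, for every large $N$, a non-degenerate sub-arc that is an arc between two exits of some $W\in\mathcal{W}_N$, i.e.\ an exit-to-exit arc of the mixed labyrinth fractal $L_\infty\cap W$, which is a scaled copy of the fractal generated by the tail sequence $\{\mathcal{A}_k\}_{k\ge N+1}$. It therefore suffices to prove both assertions of the conjecture for exit-to-exit arcs only.

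For the length, consider such an arc, say of type $\A$. For each $n$ it meets exactly the white squares of level $n$ lying on the unique path $p_n$ in $\mathcal{G}(\mathcal{W}_n)$ between the top and the bottom exit square; write $\ell_n$ for their number. By Proposition~\ref{lemma:m^n} the arc has length at least $(\ell_n-1)/(2\,m(n))$, so it suffices to show $\ell_n/m(n)\to\infty$. The substitution structure gives a recursion: $p_n$ projects to a path between the top and the bottom exit square of $\mathcal{A}_1$, and inside each level-$1$ white square on that projected path the arc runs between two exits of that square, along a path of one of the six types in a copy of the labyrinth set generated by $\mathcal{A}_2,\dots,\mathcal{A}_n$. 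Since $\mathcal{A}_1$ is vertically blocked, the column carrying its top and bottom exit contains a black square, so the projected path is not that straight column and has at least $m_1+2$ squares; iterating, one would like to reach a bound of the form $\ell_n\ge\prod_{k=1}^n(m_k+\Theta(1))$, which against $m(n)=\prod_{k=1}^n m_k$ yields infinite length as soon as $\prod_k\frac{m_k+\Theta(1)}{m_k}$ diverges.

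The hard part is to make this recursion honest, and I expect it to carry the real content of the conjecture. A corner-type sub-path (types $\C,\D,\E,\F$) can be very short: one can build a horizontally and vertically blocked labyrinth pattern in which the top exit square is adjacent to the right exit square, so that its $\C$-path has only two squares. Hence the naive estimate $\ell_n\ge\prod_k\min_{\mathrm{type}}(\text{shortest path of that type in }\mathcal{A}_k)$ may degrade to $\ell_n\ge 2^n$, which is useless against $m(n)\ge 4^n$. What is really needed is a uniform crossing-number lemma: because the two exits joined by \emph{any} exit-to-exit path of $\mathcal{W}_n$ are anchored on the boundary of the unit square, and the blocked column (or row) between some of the exit pairs forces a detour at every level, every such path must traverse of order $m(n)$ of the level-$n$ squares from one side of $\mathcal{W}_n$ towards the opposite one. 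I would attempt this by induction on $n$, tracking all six path-type lengths of $\mathcal{W}_n$ simultaneously and exploiting that a short corner-type sub-path at one level cannot persist at the next: the two exits it joins cannot both lie in a corner at two consecutive levels, again by Property~\ref{prop3}, so a genuine near-straight crossing of a sub-square must reappear deeper down. A second, quantitative, point is that when the widths $m_k$ grow fast one must check that the per-level gain $m_k+\Theta(1)$ (or a larger gain forced by blocking) is still enough to make $\prod_k(1+\Theta(1/m_k))$ diverge; if it were not, the conjecture as stated would be in doubt, so this deserves particular care.

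For the tangent assertion I would use that the points of $a$ occurring as exits of level-$N$ squares, $N\ge 1$, subdivide $a$ into exit-to-exit sub-arcs of arbitrarily small diameter and are therefore dense in $a$; at such a point $p$ the arc enters the square $W_N(p)$ along a piece that, since $\mathcal{A}_{N+1}$ is blocked, detours around a black sub-square and so is not contained in any line through $p$. Turning this into the non-existence of a tangent at $p$ requires, as in the self-similar case \cite{laby_oigemoan}, a lower bound on the size of this deviation relative to the scale $1/m(N+1)$, and I expect this estimate to interact with exactly the width-growth difficulty noted above; this is the second place where the argument might stall.
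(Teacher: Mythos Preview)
The fundamental problem with your proposal is that the conjecture is \emph{false}, and the paper disproves it rather than proves it. Example~1 in the paper exhibits a sequence of special cross patterns---each both horizontally and vertically blocked---with widths $m_k=2a_k+1$, $a_k=5^k$, for which every exit-to-exit path in $\mathcal{G}(\mathcal{W}_n)$ has length exactly $\prod_{k=1}^n(m_k+4)$, so the relevant ratio is $\prod_{k=1}^n(1+4/m_k)$, which \emph{converges} because $\sum_k 1/m_k<\infty$. The paper then upgrades this (via Lemmas~\ref{lemma:norminfinity_convergence} and~\ref{lemma:unif_conv_implies_finite_length} and the arc construction) to show that the arc between \emph{any} two points of the resulting $L_\infty$ has finite length; this is Proposition~\ref{prop:exist_patterns_finite_arcs}.

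You in fact put your finger on the exact mechanism of failure when you wrote that ``when the widths $m_k$ grow fast one must check that the per-level gain $m_k+\Theta(1)$ \dots\ is still enough to make $\prod_k(1+\Theta(1)/m_k)$ diverge; if it were not, the conjecture as stated would be in doubt.'' That doubt is warranted: the blocking hypothesis buys at most an additive constant per level in the path length (for the cross patterns it is exactly $+4$), and no stronger ``uniform crossing-number lemma'' of the kind you hope for can hold, since the special cross patterns realise the bound $\ell_n=\prod_k(m_k+4)$ with equality. Your worry about short corner-type sub-paths is a red herring here---in the counterexample all six path types have the same length---the obstruction is purely the growth rate of the $m_k$. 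The paper closes by replacing the refuted conjecture with a weaker one requiring the width sequence to be bounded.
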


In this article we solve the arc length problem posed by the above conjecture by showing that, depending on the choice of the both horizontally and vertically blocked labyrinth patterns in the sequence $\{{\cal A}_k\}_{k=1}^{\infty}$, both situations can occur: the arc between any points of the fractal has finite length, or the arc between any two points of the fractal has infinite length.

\begin{figure}[h!]
\begin{center}
\includegraphics[width=0.3\textwidth]{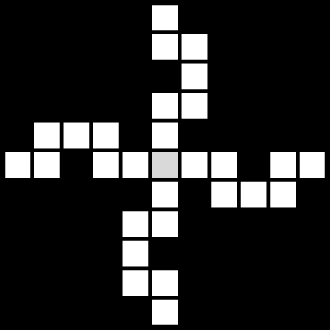}
\caption{An example: the special cross pattern ${\cal A}_1$ with $m_1=11$}
\label{fig:ex:A_1_finite_arc}
\end{center}
\end{figure}
\vspace{0.3cm}
\noindent {\bf Example 1.}
Let $\{{\cal A}_k\}_{k=1}^{\infty}$ be a sequence of (both horizontally and vertically) blocked labyrinth patterns, $m_k\ge 11$, with $m_k= 2 a_k+1$, $a_k\ge 5$.

We consider a sequence of patterns that are both vertically and horizontally blocked and have a 
``cross shape'' like ${\cal A}_1$ in Figure 
\ref{fig:ex:A_1_finite_arc}, i.e., the pattern looks like a ``cross" centered in the ``central square" of the 
pattern (here coloured in light grey) and each ``arm" of the cross is ``blocked" such that in order to get from the ``center" of the cross 
to any of the four exits of the pattern we have to go a detour around a black square that lies in 
the same row or column as the respective exit and the mentioned ``central square" of the pattern. More precisely, we position the four black squares between the central square 
and the exits of ${\cal A}_k$ in the columns (rows) $(a_k+1)/2$ and $(3a_k+3)/2$, if $a_k$ is odd, 
and in the columns (rows) 
$(a_k+2)/2$ and $(3a_k+2)/2$, 
if $a_k$ is even. We call these patterns 
\emph{special cross patterns}.
One can immediately see that the ``central square" of such a special cross pattern, where the four ``arms" of 
the cross meet, changes its type, depending on the path in ${\cal{G}}({\cal A}_1)$ that we consider between 
two exits of the pattern ${\cal A}_1$: in the $\A$-path in the pattern, it is a $\A$-square, and in the 
$y$-path, it is a $y$-square, for any $y\in \{\B,\C,\D,\E,\F \}$. 
\\[0.3cm]
We recall that the path matrix of a labyrinth set or a labyrinth pattern ${\cal A}$ is a 
$6\times 6$-matrix $M$ such that the element in  row $x$ and column $y$ is the number of 
$y$-squares in the $x$-path in $\mathcal{G}({\cal A})$. It was proven \cite[Proposition 1]{mixlaby} that, for any sequence of labyrinth patterns $\{{\cal A}_k\}_{k\ge 1}$ with corresponding sequence of path matrices  $\{M_k\}_{k\ge 1}$, for any integer $n\ge 1$ the matrix $M(n):= \prod_{k=1}^n M_k$ is the matrix of the mixed labyrinth set  ${\cal W}_n$ (of level $n$), i.e. the sum of the entries in any row of $M(n)$ gives the length of the path between two of the exits in $\mathcal{G}({\cal W}_n)$. For more details and properties of 
path matrices we refer to the papers \cite{ laby_4x4, laby_oigemoan, mixlaby}.\\[0.2cm]

With the help of Figures \ref{fig:ex:A_1_finite_arc} and \ref{fig:sequence_special_cross_patterns} one can easily check that for this special sequence of labyrinth patterns $\{ {\cal A}_k\}_{k \ge 1}$, the path matrix of the pattern ${\cal A}_k$ is 
\[
M_k=\left(
\begin{array}{rrllll}
2a_k-3 & 0 & 2 & 2 & 2 & 2\\
0 & 2a_k-3 & 2 & 2 & 2 & 2\\
a_k-2 & a_k-2 & 3 & 2 & 2 & 2\\
a_k-2 & a_k-2 & 2 & 3 & 2 & 2\\
a_k-2 & a_k-2 & 2 & 2 & 3 & 2\\
a_k-2 & a_k-2 & 2 & 2 & 2 & 3\\
\end{array}\right)
,
~~\mbox{for}~~ k\ge 1~~ \mbox{and}~~ m_k=2a_k+1.
\]

Thus the length of the path in $\mathcal{G}({\cal A}_k)$ between any two exits is in this case 
exactly $2a_k+5$.
Herefrom we obtain that the length of any path between two exits in ${\cal W}_n$ is then 
$\prod_{k=1}^n(2a_k+5)=\prod_{k=1}^n(m_k+4)$. 

\begin{figure}
\begin{center}
\includegraphics[width=0.18\textwidth]{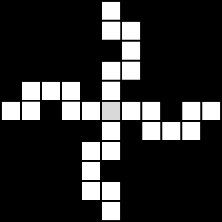}
\includegraphics[width=0.18\textwidth]{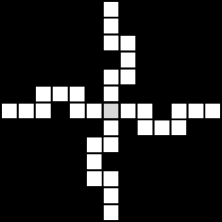}
\includegraphics[width=0.18\textwidth]{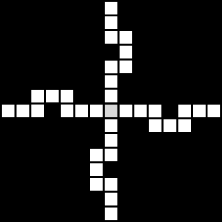}
\includegraphics[width=0.18\textwidth]{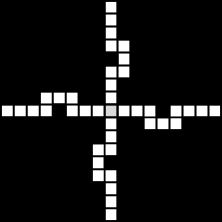}
\includegraphics[width=0.18\textwidth]{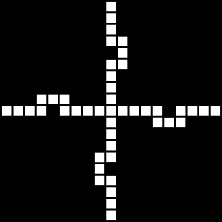}
\end{center}
\caption{Example: five consecutive elements of a sequence of special cross patterns, where $a_k=k+4$, here for $k=1,\dots,5$.}
\label{fig:sequence_special_cross_patterns}
\end{figure}

\begin{figure}[hhhh]
\begin{center}
\includegraphics[scale=0.5]{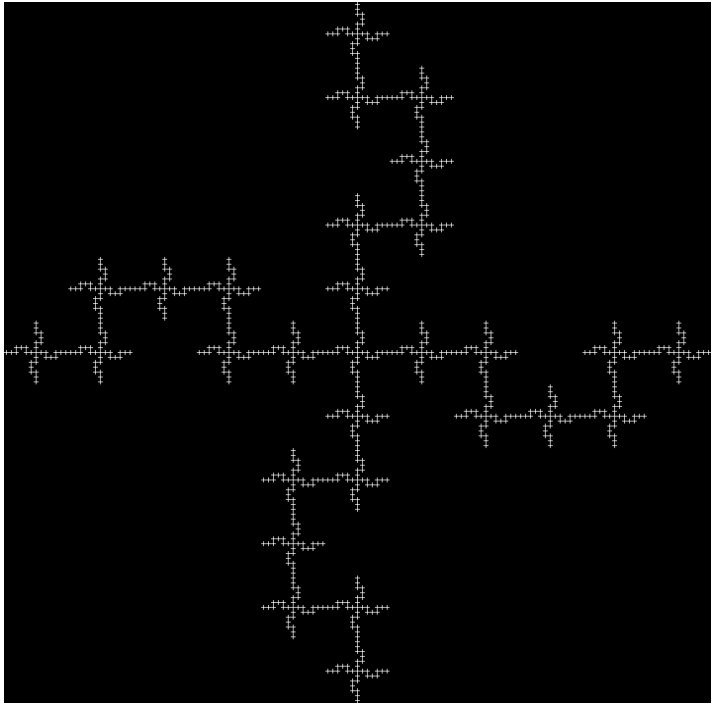}
\caption{A mixed labyrinth fractal of level 2 that is a prefractal of the mixed labyrinth fractal defined by a sequence $\{{\cal A}_k\}$ of special cross patterns as shown in Figure \ref{fig:sequence_special_cross_patterns}}
\label{fig:pre_cross5-6-7}
\end{center}
\end{figure}

As a next step, we introduce, for $n=1,2,\dots$, the curves $\gamma_n^q$, for 
$q \in {\cal E}=\{\A,\B,\C,\D,\E,\F  \}$. Here, $q  \in{\cal E}$ indicates which exits are connected by the path, 
e.g., if $q= \A$ then $\gamma_n^q$ is a curve that connects the top and the bottom exit of $L_{\infty}$, 
if $q = \B$ then $\gamma_n^q$ is a curve that connects the left and the right exit of $L_{\infty}$, and so on. 
In the sequel we define these simple curves. Let $n \ge 1$. For $q= \A$ we construct the curve 
$\gamma_n^q$ in the following way. Let $W\in {\cal W}_n$ be, e.g., a square of type $\A$ in the path of type $q$. 
Then, we define the restriction $ \gamma_n^q|W$ to be the vertical line segment that connects the 
midpoints of the top and of the bottom edge of $W$. We proceed analogously in the case when $W$ 
is a square of type $\B,\C,\D,\E,\F $, in each case $ \gamma_n^q|W$ is the union of two line segments
(both horizontal, or one horizontal and one vertical)
that both go through the center of $W$ and the midpoint of some edge of $W$, such that the sum of their lengths 
is $\frac{1}{m(n)}$. We immediately get the length of the curve $\gamma_n$, for $n\ge 1$:
\begin{equation}\label{length_gamma_n}
 \ell(\gamma_n)=\prod_{k=1}^n \frac{m_k+4}{m_k}=\prod_{k=1}^n\left( 1+\frac{4}{m_k}\right).
\end{equation}

\begin{figure}[h!]
\begin{center}
\includegraphics[width=0.7\textwidth]{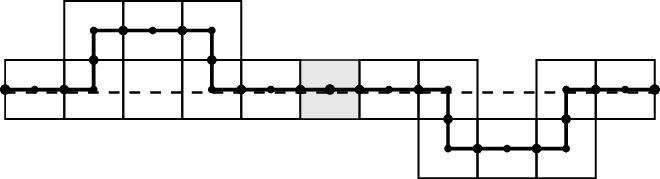}
\caption{This picture shows a fragment of $\gamma _n ^q$, and (dashed) $\gamma _{n-1} ^q$. Here $q \in {\cal E}$ indicates that the arc connects the left and right exit of a square $W \in {\cal W}_n$, in the picture the most left and most right dotted points.  }
\label{fig:consecutive_paths}
\end{center}
\end{figure}

Now we study the sequence $\{\ell(\gamma_n) \}_{n\ge 1}$. From \eqref{length_gamma_n} we easily see that
$\{\ell(\gamma_n) \}_{n\ge 1}$ is a strictly increasing sequence, thus $\lim_{n\to \infty} \ell(\gamma_n)=
\sup_{n=1,2,\dots} \ell(\gamma_n).$\\
\noindent
By basic mathematical analysis facts $\prod_{k=1}^n \left( 1+\frac{4}{m_k}\right)$ 
converges if and only if $ \sum_{k\ge 1} \frac{4}{m_k}$ converges, i.e., 
$ \sum_{k\ge 1} \frac{1}{m_k}<\infty$.
By taking, e.g., $a_k=5^k$, for $k=1,2,\dots$ we obtain 
$\sup_{n=1,2,\dots} \ell(\gamma_n)=\lim_{n \to \infty} \ell(\gamma_n)<\infty$.
\\[0.3cm        ]
\noindent
{\bf Remark.} One can verify, by using the definition of the Hausdorff distance $d_H$, that, for $q \in {\cal E}$,   
the arc $\gamma^q$ in $L_{\infty}$ that connects the two exits of $L_{\infty}$ indicated by $q$ satisfies 
$d_H(\gamma_n^q, \gamma^q)\to 0$, for $n\to \infty.$ 
Here we mean the Hausdorff distance between the images of the two curves, as sets in the Euclidean plane endowed with the Euclidan distance.

\begin{lemma}
\label{lemma:norminfinity_convergence} 
With the above notation, there are parametrisations ${\tilde{\gamma}}_n^q(t):[0,1]\to [0,1]^2$ 
and ${\tilde{\gamma}}^q(t):[0,1]\to [0,1]^2$   of ${\gamma}_n^q$ 
and ${\gamma}^q$, respectively, such that for all $q \in {\cal E}$, we have 
$||{\tilde{\gamma}}_n^q - {\tilde{\gamma}} ^q   ||_{\infty} \to 0$, for $n \to \infty$, where $|| \cdot||_{\infty}$ is the supremum norm.

\end{lemma}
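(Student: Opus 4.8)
The plan is to construct the parametrisations level by level, keeping each $\tilde\gamma_n^q$ compatible with $\tilde\gamma_{n-1}^q$ so that the sequence is uniformly Cauchy. Fix $q\in{\cal E}$. Recall from Lemma~\ref{lemma:ArcSimilarity1} (and the construction preceding Lemma~\ref{lemma:norminfinity_convergence}) that the path $p_n$ of type $q$ in $\mathcal{G}({\cal W}_n)$ refines $p_{n-1}$: each square $W\in p_{n-1}$ of type $x$ is subdivided, and the portion of $p_n$ inside $W$ is exactly the $x$-path of the rescaled copy of ${\cal A}_n$ sitting in $W$. Correspondingly, $\gamma_n^q\cap W$ is a curve joining the two relevant edge-midpoints of $W$ through its interior. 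First I would parametrise $\gamma_1^q$ on $[0,1]$ by constant speed (or any fixed affine schedule on each of the finitely many squares of $p_1$). Then, inductively, given the parametrisation $\tilde\gamma_{n-1}^q$, which on the preimage interval $I_W\subseteq[0,1]$ of a square $W\in p_{n-1}$ is some fixed affine-per-square map, I would define $\tilde\gamma_n^q$ on the \emph{same} interval $I_W$ to be the constant-speed (or fixed-schedule) parametrisation of $\gamma_n^q\cap W$, which stays inside $W$. This is the key bookkeeping step: the partition of $[0,1]$ into the intervals $\{I_W : W\in p_{n-1}\}$ is common to $\tilde\gamma_{n-1}^q$ and $\tilde\gamma_n^q$, and both curves, restricted to $I_W$, have image contained in the square $W$, which has side $1/m(n-1)$.

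From this the uniform estimate is immediate. For $t\in I_W$ with $W\in p_{n-1}$, both $\tilde\gamma_{n-1}^q(t)$ and $\tilde\gamma_n^q(t)$ lie in $W$, hence
\[
\|\tilde\gamma_{n}^q-\tilde\gamma_{n-1}^q\|_\infty \le \frac{\sqrt2}{m(n-1)}.
\]
Since $m_k\ge 11$ (indeed $m_k\ge 4$ suffices) we have $m(n-1)=\prod_{k=1}^{n-1}m_k\to\infty$ geometrically, so $\sum_n \frac{\sqrt2}{m(n-1)}<\infty$ and $\{\tilde\gamma_n^q\}$ is uniformly Cauchy in $C([0,1],[0,1]^2)$. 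Let $\tilde\gamma^q$ denote its uniform limit; it is a continuous map $[0,1]\to[0,1]^2$. It remains to identify the image of $\tilde\gamma^q$ with the arc $\gamma^q$. For each $n$ and each $t$, $\tilde\gamma^q(t)=\lim_m \tilde\gamma_m^q(t)$ lies in the closure of $\bigcup_{W\in p_n(W_n(e_1),W_n(e_2))}W = L_n$ restricted to that path-union; intersecting over $n$ and invoking Lemma~\ref{lemma:Construction}(e) (the nested intersection of the path-unions is exactly the arc $\gamma^q$) shows $\tilde\gamma^q(t)\in\gamma^q$ for all $t$, so the image is contained in $\gamma^q$; conversely, since the endpoints are fixed and $\tilde\gamma^q$ is continuous on the connected interval $[0,1]$, its image is a subcontinuum of the arc $\gamma^q$ containing both endpoints, hence equals $\gamma^q$. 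Thus $\tilde\gamma^q$ is a parametrisation of $\gamma^q$, and by construction $\|\tilde\gamma_n^q-\tilde\gamma^q\|_\infty\le \sum_{j>n}\frac{\sqrt2}{m(j-1)}\to 0$.

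The main obstacle is not the estimate — once the nesting of partitions is set up, the bound $\sqrt2/m(n-1)$ is forced — but rather making the inductive construction genuinely consistent: one must check that the intervals $I_W$ can be chosen \emph{once and for all} (i.e. that the preimage-interval structure does not need to be redefined when passing from level $n-1$ to level $n$), and that $\gamma_n^q\cap W$ is itself a simple arc from the correct entry midpoint to the correct exit midpoint of $W$, so that a continuous-speed parametrisation on $I_W$ exists and matches endpoints with the neighbouring squares in $p_{n-1}$. This is where Lemma~\ref{lemma:ArcSimilarity1} is essential: it guarantees that the type of each square in $p_{n-1}$ is respected inside the finer pattern, so the midpoint-to-midpoint description of $\gamma_n^q|W$ used in the paragraph preceding the lemma is the correct one, and the two line segments (or single segment) composing it join up continuously across shared edges of consecutive squares. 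Once this compatibility is verified, everything else is routine.
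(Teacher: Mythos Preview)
Your argument is correct. The paper's own proof is only a one-line sketch: it asserts that one can directly choose parametrisations with $\|\tilde\gamma_n^q - \tilde\gamma^q\|_\infty \le \frac{3}{2}\cdot\frac{1}{m(n)}$, implicitly by first fixing a homeomorphism $\tilde\gamma^q:[0,1]\to\gamma^q$ (the arc $\gamma^q$ already exists by Lemma~\ref{lemma:Construction}), reading off the intervals $I_W$ for $W\in p_n$ as preimages of the sub-arcs $\gamma^q\cap W$ (which are genuine sub-arcs by Proposition~\ref{lemma:ArcSimilarity1}), and then parametrising each $\gamma_n^q$ to traverse $W$ on that same $I_W$. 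Your route is different: you do not start from a parametrisation of $\gamma^q$ but construct it as the uniform limit of the $\tilde\gamma_n^q$, proving the Cauchy estimate $\|\tilde\gamma_n^q-\tilde\gamma_{n-1}^q\|_\infty\le\sqrt{2}/m(n-1)$ between \emph{consecutive} levels and then identifying the limit with $\gamma^q$ via the nested intersection in Lemma~\ref{lemma:Construction}(e). Both approaches rest on the same geometric confinement (the relevant pieces of both curves lie in a common square of side $1/m(n)$), but yours is more self-contained --- it manufactures the parametrisation of $\gamma^q$ rather than presupposing one --- at the price of the extra identification step at the end. The paper's direct comparison is shorter and gives the bound in one stroke, once one grants that the sub-arc decomposition of $\gamma^q$ along the squares of $p_n$ yields a consistent nested family of intervals for all $n$ simultaneously; your inductive construction is exactly what makes that consistency explicit.
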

\begin{proof}Sketch: the proof of Lemma \ref{lemma:norminfinity_convergence} is based on the fact that one can easily find parametrisations ${\tilde{\gamma}}_n^q(t):[0,1]\to [0,1]^2$ 
and ${\tilde{\gamma}}^q(t):[0,1]\to [0,1]^2$   of ${\gamma}_n^q$ and ${\gamma}^q$, respectively, such that
$||\tilde{\gamma}_n^q - \tilde{\gamma} ^q   ||_{\infty} \le \frac{3}{2}\cdot \frac{1}{m(n)}$, for $n=1,2, \dots$. See also Figure \ref{fig:consecutive_paths}.
\end{proof}

\begin{lemma}
 \label{lemma:unif_conv_implies_finite_length}
Let ${\tilde{\gamma}}_n:[0,1]\to [0,1]^2$ 
and ${\tilde{\gamma}}:[0,1]\to [0,1]^2$ be parametrisations of the planar curves ${\gamma}_n$ and ${\gamma}$, respectively, whose lengths we denote by $\ell({\gamma}_n)$ and $\ell({\gamma})$. If 
$||{\tilde{\gamma}}_n  -  {\tilde{\gamma}}||_{\infty} \to 0$, 
for ${n \to \infty}$, and $ \displaystyle \sup_{n}\ell({\gamma}_n) <\infty$, then $ \ell({\gamma})<\infty$. Moreover, in this case the following inequalities hold: $\displaystyle \liminf_{n \to \infty} \ell({\gamma}_n)  \le \ell({\gamma}) \le \limsup_{n \to \infty} \ell({\gamma}_n)$.
\end{lemma}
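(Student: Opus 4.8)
The plan is to prove this as a statement in pure real analysis about rectifiable planar curves, using only the hypotheses as stated: uniform convergence of parametrisations together with a uniform length bound. The key tool is the characterisation of curve length as a supremum over finite partitions of the total variation of the parametrisation.

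First I would fix notation for the length functional. For a partition $\pi = \{0 = t_0 < t_1 < \cdots < t_N = 1\}$ of $[0,1]$ and a parametrisation $\tilde{\sigma}:[0,1]\to[0,1]^2$, write
\[
V(\tilde{\sigma},\pi) = \sum_{i=1}^{N} \| \tilde{\sigma}(t_i) - \tilde{\sigma}(t_{i-1}) \|,
\]
so that $\ell(\sigma) = \sup_{\pi} V(\tilde{\sigma},\pi)$, where the supremum ranges over all finite partitions and is independent of the chosen parametrisation. The main structural fact I would establish is that for each fixed partition $\pi$, the map $\tilde{\sigma}\mapsto V(\tilde{\sigma},\pi)$ is continuous with respect to the supremum norm: indeed, by the triangle inequality in $\mathbb{R}^2$, each summand $\| \tilde{\sigma}(t_i) - \tilde{\sigma}(t_{i-1}) \|$ changes by at most $2\,\|\tilde{\sigma} - \tilde{\tau}\|_{\infty}$ when $\tilde{\sigma}$ is replaced by $\tilde{\tau}$, so $|V(\tilde{\sigma},\pi) - V(\tilde{\tau},\pi)| \le 2N\,\|\tilde{\sigma}-\tilde{\tau}\|_{\infty}$.

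From this continuity I would obtain the lower semicontinuity of length, which gives the left inequality. Fix any partition $\pi$ with $N$ points. By the continuity just noted, $V(\tilde{\gamma}_n,\pi) \to V(\tilde{\gamma},\pi)$ as $n\to\infty$ since $\|\tilde{\gamma}_n - \tilde{\gamma}\|_{\infty}\to 0$. Because $V(\tilde{\gamma}_n,\pi) \le \ell(\gamma_n)$ for every $n$, passing to the $\liminf$ yields $V(\tilde{\gamma},\pi) = \lim_{n\to\infty} V(\tilde{\gamma}_n,\pi) = \liminf_{n\to\infty} V(\tilde{\gamma}_n,\pi) \le \liminf_{n\to\infty} \ell(\gamma_n)$. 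Taking the supremum over all partitions $\pi$ gives $\ell(\gamma) \le \liminf_{n\to\infty}\ell(\gamma_n)$. In particular, since $\liminf_{n\to\infty}\ell(\gamma_n) \le \sup_n \ell(\gamma_n) < \infty$ by hypothesis, this already proves the finiteness claim $\ell(\gamma)<\infty$, and simultaneously the first of the two displayed inequalities.

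The harder half is the upper bound $\ell(\gamma) \le \limsup_{n\to\infty}\ell(\gamma_n)$, and this is where I expect the main obstacle to lie, because lower semicontinuity of length under uniform convergence is automatic whereas upper semicontinuity is \emph{false} in general (the uniform limit of curves can be strictly shorter, as in any polygonal approximation of a smooth arc). The point is that here $\gamma$ is the limit and the $\gamma_n$ are the approximants, so I must exploit the fact that $\gamma$ is itself the uniform limit, not a coarsening. To handle this I would argue as follows: let $L = \limsup_{n}\ell(\gamma_n)$ and suppose for contradiction that $\ell(\gamma) > L$; since $\ell(\gamma)<\infty$ is already known, choose a partition $\pi$ with $V(\tilde{\gamma},\pi) > L$, hence $V(\tilde{\gamma},\pi) > L + \varepsilon$ for some $\varepsilon>0$. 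By the continuity of $V(\cdot,\pi)$ in the supremum norm, $V(\tilde{\gamma}_n,\pi) > L + \varepsilon/2$ for all large $n$, and since $V(\tilde{\gamma}_n,\pi)\le \ell(\gamma_n)$ this forces $\ell(\gamma_n) > L + \varepsilon/2$ along a subsequence, contradicting $L = \limsup_n \ell(\gamma_n)$. The delicate point to verify carefully in writing this out is that the contradiction genuinely uses the \emph{same} fixed partition $\pi$ for all large $n$, so that the supremum-norm continuity bound applies uniformly; I would also double-check that the standard independence of $\ell$ from the parametrisation is invoked correctly, so that the two inequalities together pin down $\ell(\gamma)$ between the $\liminf$ and $\limsup$ of the approximant lengths.
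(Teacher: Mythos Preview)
Your finiteness argument is correct and is essentially the paper's: both fix a partition $\pi$, use the triangle inequality to get $|V(\tilde\gamma,\pi)-V(\tilde\gamma_n,\pi)|\le 2N\|\tilde\gamma-\tilde\gamma_n\|_\infty$, and conclude $V(\tilde\gamma,\pi)\le\liminf_n\ell(\gamma_n)$ for every $\pi$, hence $\ell(\gamma)<\infty$. The paper packages this as a proof by contradiction (assume $\ell(\gamma)=\infty$, choose $\pi$ with $V(\tilde\gamma,\pi)>\sup_n\ell(\gamma_n)+1$, and derive $\ell(\gamma_n)$ too large) and explicitly leaves the displayed inequalities as an exercise, noting that finiteness alone suffices for its purposes.

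However, you have the directions of those inequalities confused. What you actually establish is $\ell(\gamma)\le\liminf_n\ell(\gamma_n)$, i.e.\ lower semicontinuity of arc length; this immediately yields the \emph{right}-hand inequality $\ell(\gamma)\le\limsup_n\ell(\gamma_n)$ as a trivial consequence, so your ``harder half'' paragraph is redundant (it is literally the same argument again, with $\limsup$ written in place of $\liminf$). What you have \emph{not} proven is the \emph{left}-hand inequality $\liminf_n\ell(\gamma_n)\le\ell(\gamma)$, and your assertion that lower semicontinuity ``gives the left inequality'' has the sign backwards.

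In fact, in the generality of the lemma that left inequality is false: take $\tilde\gamma$ to parametrise the diagonal of the unit square and $\tilde\gamma_n$ an $n$-step staircase; then $\|\tilde\gamma_n-\tilde\gamma\|_\infty\to0$ and $\ell(\gamma_n)=2$ for every $n$, yet $\ell(\gamma)=\sqrt2<2=\liminf_n\ell(\gamma_n)$. Since the paper never uses this inequality (only the finiteness conclusion is invoked downstream), the gap is harmless for the application, but you should not claim to have established it.
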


\begin{proof}
We give an indirect proof of the first assertion of the lemma, the second one we leave as an exercise, since for our purposes the first assertion is already enough. Assume $\ell({\gamma})=\infty$. Since  $ \displaystyle \sup_{n}\ell({\gamma}_n) <\infty$, we can choose a positive integer $N$ such that $\displaystyle N> \sup_{n}\ell({\gamma}_n)+1$. Then, by the definition of the length of a curve, there exist the real numbers $0=s_0<s_1<\dots < s_m=1$, $m \ge 1$, such that $\sum_{k=1}^m ||{\tilde{\gamma}}(s_k)- {\tilde{\gamma}}(s_{k-1}) ||>N$, where $||\cdot ||$ denotes the Euclidean norm in the plane. From the convergence hypothesis it follows that there exists an integer $n_0$ such that for every $n \ge n_0$ we have $||{\tilde{\gamma}}-{\tilde{\gamma}}_n ||_{\infty}<\frac{1}{2(m+1)}$, and thus $\displaystyle \max_{k=1,\dots,m} ||{\tilde{\gamma}}(s_k)-{\tilde{\gamma}}_n(s_k) ||<\frac{1}{2(m+1)}$. Moreover, by the definition of the length of a curve, $\ell(\gamma _n)\ge \sum_{k=1}^m ||{\tilde{\gamma}}_n(s_k)- {\tilde{\gamma}}_n(s_{k-1}) ||$.
Thus, we now easily obtain the following inequalities:

\begin{align*}
N &< \sum_{k=1}^m ||{\tilde{\gamma}}(s_k)- {\tilde{\gamma}}(s_{k-1}) || \\
& \le \sum_{k=1}^m \left(||{\tilde{\gamma}}(s_k)- {\tilde{\gamma}_n}(s_{k}) ||+||{\tilde{\gamma}}_n(s_k)- {\tilde{\gamma}}_n(s_{k-1}) ||+||{\tilde{\gamma}}_(s_{k-1})- {\tilde{\gamma}}(s_{k-1}) ||\right) \\
& \le 2 \sum_{k=0}^m ||{\tilde{\gamma}}(s_k)- {\tilde{\gamma}}_n(s_{k}) || + \sum_{k=1}^m ||{\tilde{\gamma}}_n(s_k)- {\tilde{\gamma}}_n(s_{k-1}) || \le 1 + \ell(\gamma_n), 
\end{align*}
which leads to a contradiction.
\end{proof}

Now, for an arbitrary $n\ge 1$, let us take $W \in {\cal W}_n$, $L_{\infty}| W:=L_{\infty}\cap W$ and consider any two of the exits $e_1$, $e_2$ of the square $W$ (as defined in the paper \cite{mixlaby}). Then the arc in $L_{\infty}| W$ that connects $e_1$ and $e_2$ is the scaled image of the arc between two exits (of the same types) of a labyrinth set $L'_{\infty}$ generated by the sequence of patterns $\{{\cal A'}_k\}_{k=1}^{\infty}$, where ${\cal A'}_k={\cal A}_{k+n}$, and the scaling factor is $m(n)$. Therefore, one can easily see that the arc beween any such exits of any square  $W \in {\cal W}_n$, for any $n\ge 1$, is finite. 
\\
Herefrom it then easily follows that if $x,y$ are points  that belong to the set  of  points in $L_{\infty}$ that consists of all centres and all exits of squares of $\cup_{n\ge 1} {\cal V}({\mathcal G}({\cal W}_n))$, then the length of the arc in $L_{\infty}$ that connects $x$ and $y$ is finite.

Let ${E}_n$ be the set of all points of $L_{\infty}$ that are exits of squares of level $n$, and  ${C}_n$ be the set of all points of $L_{\infty}$ that are centers of squares of level $n$.  For any two distinct points $x',y' \in L_{\infty}$, we introduce the notation $a (x',y')$ for the arc in $ L_{\infty}$ that connects the points $x'$ and $y'$. Let now $W\in {\cal V}({\mathcal G}({\cal W}_n))$, with $ n\ge 0$ (for $n=0$, $W$ is the unit square, otherwise it is a white square of level n, as defined before). Let $c$ be the center of $W,$ and $e$ one of its four exits.
Now, we want to show  that for any point $x \in (\inter W \cap L_{\infty})\setminus \cup_{k\ge n}(E_k \cup C_k)$, where $\inter$ denotes the interior of the set, $\ell(a(x,c))<\infty$ and $\ell(a(x,e))<\infty$. In the following we give a proof of the first inequality.
Therefore, we consider two cases.
 
First, we assume
 that $x$ is a point on one of the four ``main arms'' of $L_{\infty} \cap W$ (which is in fact the scaled image of the mixed labyrinth fractal defined by the sequence $\{{\cal A}_k\}_{k \ge n+1}$), i.e., $x$ lies on the arc in $L_{\infty}$ that connects the center of $W$ with one of its exits. In this case, it easily follows from the above results that the length of the arc in $L_{\infty}$ between $x$ and the center of $W$ has finite length (that is less than one half of the length of the arc between two exits in $W$).

 In the second case, we assume that $x$ does not lie on a ``main arm'' of $L_{\infty} \cap W$, i.e., $X$ lies on a ``branch'' of the dendrite, that originates at a point, say $c'$, with $c'\in \bigcup_{k\ge n+1}C_k,$ that lies on one of the four  ``main arms'' of  $L_{\infty} \cap W$ (that connects the center $c\in C_n$ of $W$ with one of its exits, say $e\in E_n$).
By the construction of the fractal and of arcs in the fractal (Lemma \ref{lemma:Construction}), there exists a point $ e'\in \bigcup_{k\ge n+1}E_k $ such that $x$ lies on the arc  $a(c',e')$ in $L_{\infty}$ that connects $c'$ and $e'$, which, due to the above considerations, has finite length. 
Since $ \ell(a(c,x))=  \ell(a(c,c'))+ \ell(a(c',x))\le \ell(a(c,c'))+ \ell(a(c',e'))= \ell(a(c,e'))<\infty $, it follows that $ \ell(a(c,x)) <\infty$.
We leave the proof of the inequality $\ell(a(x,e))<\infty$ to the reader as an exercise.

 Let now $x,y\in L_{\infty}$ be two distinct points, and let $W_n(x)$ and $W_n(y)$ be two squares in ${\cal W}_n$ such that $x\in W_n(x)$ and $y\in W_n(y)$. The squares $W_n(x)$, $n\ge 1$ are chosen in the following way: let $W(x)$ be the set of all white squares in $\bigcup_{n=1}^{\infty} {\cal W}_n $ that contain $x$. Let now $W_1(x)$ be a white square in ${\cal G} ({\cal W}_1)$ that contains infinitely  many white squares of $W(x)$ as a subset. Now we define, for $n\ge 2$, $W_n(x)$ as a white square in ${\cal G}({\cal W}_n)$, such that $W_n(x)\subseteq W_{n-1}(x) $, and $W_n(x)$ contains infinitely many squares of $W(x)$ as a subset. We define $W_n(y)$, for $n\ge 1,$ in the analogous manner.
Let $p_n$ be the path between $W_n(x)$ and $W_n(y)$ constructed as in Lemma \ref{lemma:Construction}. Since $x\ne y$, it follows that there exists an integer $k\ge 1$, such that $p_k$ consists of at least $3$ squares. Let then $W \in p_n$ be a square with $W \notin \{ W_n(x), W_n(y)\}$. By the construction of the arc $a$ in $L_{\infty}$ between $x$ and $y$ as described in Lemma \ref{lemma:Construction}, $a \cap W$ is an arc between two exits of W, and thus has finite length. Since $a(x,y)$ is the union of finitely many such arcs of finite length with the arcs $a(x,e_x)$ and $a(e_y,y)$, where $e_x$ ist one of the exits of $W_n(x)$ and $e_y$ is one of the exits of $W_n(y)$, namely $\{e_x\}=a(x,y)\cap \fr(W_n(x))$, and  $\{e_y\}=a(x,y)\cap \fr(W_n(y))$, it follows that $a(x,y)$ has infinite length. \\

The above example shows that one can find a sequence of patterns that generates a mixed labyrinth fractal with the property that the length of the arc that connects any two points in the fractal is finite. Moreover, one can see that for a labyrinth pattern that contains such a ``special cross" the length of the paths between the exits of the pattern does not change, it is the same as here, and thus the arc lengths in the fractal also remain finite, as in the above example.

Thus we have proven the following result.
\begin{proposition}
\label{prop:exist_patterns_finite_arcs}
There exist  sequences $\{{\cal A}_k\}_{k=1}^{\infty}$ of (both horizontally and vertically) blocked labyrinth patterns, such that the limit set $L_{\infty}$ has the property that
 for any two points in $L_{\infty}$ the length of the arc $a\subset L_{\infty}$ that connects them is finite.
\end{proposition}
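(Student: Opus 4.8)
The plan is to exhibit an explicit witnessing family and verify that every arc has finite length, following the construction carried out in Example~1. I would take $\{\mathcal{A}_k\}_{k\ge 1}$ to be a sequence of \emph{special cross patterns} of width $m_k=2a_k+1$ with, say, $a_k=5^k$ (any growth fast enough that $\sum_k 1/m_k<\infty$ works equally well); each such $\mathcal{A}_k$ is a labyrinth pattern that is both horizontally and vertically blocked, so the hypothesis of the proposition is met by this choice.

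The first step is the combinatorial bookkeeping: by reading off the path matrix $M_k$ of a special cross pattern one sees that every path between two exits of $\mathcal{G}(\mathcal{A}_k)$ has length $2a_k+5=m_k+4$, and then, by the multiplicativity of path matrices (Proposition~1 of \cite{mixlaby}), every exit-to-exit path in $\mathcal{G}(\mathcal{W}_n)$ has length $\prod_{k=1}^n(m_k+4)$. Consequently the polygonal curves $\gamma_n^q$ built from these paths satisfy $\ell(\gamma_n)=\prod_{k=1}^n\bigl(1+\tfrac{4}{m_k}\bigr)$. The second, analytic, step is to pass to the limit: since $\sum_k 1/m_k<\infty$ the infinite product $\prod_k(1+4/m_k)$ converges, hence $\sup_n\ell(\gamma_n)<\infty$; combining this with Lemma~\ref{lemma:norminfinity_convergence} (existence of parametrisations with $\|\tilde\gamma_n^q-\tilde\gamma^q\|_\infty\to 0$) and Lemma~\ref{lemma:unif_conv_implies_finite_length} yields $\ell(\gamma^q)<\infty$ for each of the six exit-pair types $q$, i.e.\ the arc between any two exits of $L_\infty$ is finite.

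The third step is a self-similar bootstrap. For any $n\ge 0$ and any $W\in\mathcal{W}_n$, the set $L_\infty\cap W$ is a scaled (by factor $m(n)$) copy of the mixed labyrinth fractal generated by the shifted sequence $\{\mathcal{A}_{k+n}\}_{k\ge1}$, which still satisfies $\sum_k 1/m_{k+n}<\infty$; hence the arc between any two exits of $W$ has finite length. From this and a case distinction one gets that, for $W$ with centre $c$ and any exit $e$, and any $x\in(\inter W\cap L_\infty)$ not an exit or centre of a square of level $\ge n$, both $a(x,c)$ and $a(x,e)$ are finite: either $x$ lies on one of the four main arms of $L_\infty\cap W$, so $\ell(a(x,c))$ is at most half an exit-to-exit length, or $x$ sits on a branch issuing from a deeper-level centre $c'$ on a main arm, and then $\ell(a(c,x))\le\ell(a(c,c'))+\ell(a(c',e'))=\ell(a(c,e'))<\infty$ for a suitable deeper-level exit $e'$ past $x$. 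Finally, for arbitrary distinct $x,y\in L_\infty$, pick $n$ with the Lemma~\ref{lemma:Construction} path $p_n$ between $W_n(x)$ and $W_n(y)$ having at least three squares; by Proposition~\ref{lemma:ArcSimilarity1} the restriction of $a(x,y)$ to each intermediate square of $p_n$ is an exit-to-exit arc of that square, hence finite, and $a(x,y)$ is the union of these finitely many finite arcs with the two tail arcs $a(x,e_x)$, $a(e_y,y)$ (finite by the previous step). Summing the lengths gives $\ell(a(x,y))<\infty$.

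I expect the genuine obstacle to lie in the last two steps, namely controlling the two tail arcs from a generic point to an exit of its enclosing square: this needs the main-arm/branch dichotomy together with the observation that branches attach only at centres of deeper squares lying on main arms, so that the lengths telescope and stay summable. The path-matrix computation and the convergence of the product $\prod(1+4/m_k)$ are routine once the special cross pattern is fixed.
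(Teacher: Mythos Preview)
Your proposal is correct and follows essentially the same route as the paper's own argument in Example~1: the special cross patterns with $a_k=5^k$, the path-matrix computation giving $\ell(\gamma_n)=\prod_k(1+4/m_k)$, the passage to the limit via Lemmas~\ref{lemma:norminfinity_convergence} and~\ref{lemma:unif_conv_implies_finite_length}, the scaling bootstrap to squares $W\in\mathcal{W}_n$, the main-arm/branch case split for the tail arcs, and the final decomposition using a path $p_n$ with at least three squares are all exactly as in the paper. One small remark: for the intermediate squares of $p_n$ the paper invokes the arc construction of Lemma~\ref{lemma:Construction} rather than Proposition~\ref{lemma:ArcSimilarity1} (which as stated concerns arcs between exits of $L_\infty$), but the conclusion---that $a(x,y)\cap W$ is an exit-to-exit arc of $W$---is the same either way.
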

Based on a theorem in the book of Tricot \cite[p.73, Chapter 7.1]{Tricot} regarding the existence of the tangent to a curve of finite length, we obtain the following stronger result:
\begin{theorem}
There exist  sequences $\{{\cal A}_k\}_{k=1}^{\infty}$ of (both horizontally and vertically) blocked labyrinth patterns, such that the limit set $L_{\infty}$ has the property that
 for any two points in $L_{\infty}$ the length of the arc $a\subset L_{\infty}$ that connects them is finite. For almost all points $x_0 \in a$ (with respect to the length) there exists the tangent at $x_0$ to the arc $a$.
\end{theorem}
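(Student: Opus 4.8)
The plan is to read off this statement from Proposition~\ref{prop:exist_patterns_finite_arcs} together with the tangent theorem of Tricot quoted just before it. First I would fix, once and for all, a sequence $\{\mathcal{A}_k\}_{k=1}^{\infty}$ of both horizontally and vertically blocked labyrinth patterns witnessing Proposition~\ref{prop:exist_patterns_finite_arcs} --- concretely, the sequence of special cross patterns from Example~1 with, say, $a_k=5^k$, so that $\sum_k 1/m_k<\infty$ and hence, by the computation in Example~1, every arc of $L_{\infty}$ between two distinct points has finite length. The first assertion of the theorem is then nothing but Proposition~\ref{prop:exist_patterns_finite_arcs}, so all that remains is the statement about tangents.

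For the tangent part I would argue as follows. Let $x,y\in L_{\infty}$ be distinct and let $a=a(x,y)\subset L_{\infty}$ be the arc joining them. By Lemma~\ref{lemma:Construction}, $a$ is a simple arc, i.e.\ a homeomorphic image of an interval, and by the choice of $\{\mathcal{A}_k\}$ it has finite length; thus $a$ is a \emph{rectifiable curve} in the sense of \cite{Tricot}. The theorem of Tricot \cite[p.~73, Chapter~7.1]{Tricot} then asserts that a rectifiable curve admits a tangent at almost all of its points, the exceptional set having length-measure zero. Applying this to $a$ yields precisely the claim: for almost every $x_0\in a$ (with respect to length) the tangent to $a$ at $x_0$ exists. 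Since $x$ and $y$ were arbitrary, this holds for the arc between any two points of $L_{\infty}$.

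I do not expect a genuine obstacle here, the whole content being carried by Proposition~\ref{prop:exist_patterns_finite_arcs} and by the cited result. The only items deserving a line of checking are that Tricot's notion of ``rectifiable curve'' covers simple arcs of finite length (it does: an arc is in particular a curve, and finiteness of length is exactly rectifiability), and that Tricot's tangent is the intended geometric one --- the limiting position of the secant lines --- which is how \cite[Chapter~7.1]{Tricot} phrases it. If one wished to avoid quoting Tricot, one could instead parametrise $a$ by arc length, note that this yields a $1$-Lipschitz map, and combine Rademacher's theorem with a standard density argument for rectifiable curves to produce the tangent at almost every parameter value, then transport this to almost every point of $a$ via the (measure-preserving, injective) arc-length parametrisation; but since finiteness of length is already established, invoking \cite[Chapter~7.1]{Tricot} is the shortest route and the one the surrounding text points to.
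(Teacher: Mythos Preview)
Your proposal is correct and matches the paper's own argument essentially verbatim: the paper also deduces the theorem directly from Proposition~\ref{prop:exist_patterns_finite_arcs} combined with the tangent theorem in \cite[p.~73, Chapter~7.1]{Tricot}, with no additional ingredients. Your extra remark about the Rademacher alternative is a nice aside, but the paper does not pursue it either.
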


\noindent
{\bf Remarks}
\begin{enumerate}
\item It is easy to see that such special cross patterns as shown in Figure 
\ref{fig:ex:A_1_finite_arc}, with such a ``detour'' on each of the four arms, exist only for a pattern with width $m \ge 11$. Moreover, one can easily check that for the above example, both the box-counting  and the Hausdorff dimension of the fractal is $\dim_B(L_{\infty})=\dim _H (L_{\infty})=1$ and also the box-counting dimension of any arc that connects a pair of exits in $L_{\infty}$ is $1$. The same holds for the arc between any two distinct points in the fractal. 
\item By the definition of a mixed labyrinth fractal, by the shape of special
cross patterns, and the arc construction given in Lemma
\ref{lemma:Construction},  one can check that the fractal is the countable
union of rectifiable $1$-sets. An example of such a countable collection of
rectifiable $1$-sets is as follows: take, for any level $n\ge 1$ of the
construction, the arcs in $L_{\infty}$ that connect the center of any square
$W \in \mathcal{G}({\cal W}_n)$ and any of the midpoints of its sides, i.e.,
any of the four exits of $W,$ as well as the four arcs in $L_{\infty}$ that
connect the center of the unit square with any of its midpoints (the four exits
of the mixed labyrinth fractal).  

\item 
In Example 1 we could also take, e.g., cross patterns with $a_k=2^k$, for $k \ge 1$, and consider the first two patterns in the sequence of generating patterns, to be just unblocked, symmetric cross patterns, with the width $m_k=2 a_k+1$, $k\in \{ 1,2\}$, and for $k\ge 3$ special cross patterns. Then, ${\cal W}_n$ is blocked  for all $n\ge 3$, and the resulting limit set $L_{\infty}$ would still have the property that the arc between any two points in the fractal has finite length. 
\end{enumerate}
In the following example we show that one can use blocked cross patterns like the one shown in Figure \ref{fig:ex:A_1_finite_arc} in order to construct mixed labyrinth fractals with the property that the arc between any two points in the fractal has infinite length.
\\\\
\noindent {\bf Example 2.} Let $\{{\cal A}_k\}_{k=1}^{\infty}$ be a sequence of special cross patterns like those occurring in Example 1,  $r \ge 2$ be an arbitrarily fixed integer, and let  
$\{{\cal A}'_i,~i=1,\dots,r\} \subset \{{\cal A}_k\}_{k\ge 3} $ be a finite set of blocked labyrinth patterns among those in the above infinite  sequence, where $m'_i=2a'_i+1$ denotes the width of the pattern ${\cal A}'_i$, and $l'_i=2a'_i+5$ is the lenght of the path between any two exits in  ${\cal G}({\cal A}'_i)$,  for $i=1,\dots,r$. We define a new sequence of labyrinth patterns  $\{{\cal A}^*_j\}_{j\ge1}$, e.g., in the following way: ${\cal A}^*_j \in \{  {\cal A}'_i, ~i = 1,\dots,r\}$. Let $L_{\infty}$ be the mixed labyrinth fractal generated by the sequence of patterns $\{{\cal A}^*_j\}_{j\ge1}$, and let $a^*:=\max \{a'_i,~i=1,\dots ,r\} $, $m^*:=2a^*+1$, and $l^*:=2a^*+5$. 
Since 

\[
\prod_{{i=1,\dots,r, \atop {k_1+\dots+k_r=n,} }\atop {k_i\ge 0}}\left(\frac{l'_i}{m'_i}\right)^{k_i}>\left(\frac{l^*}{m^*}\right)^n=\left(1+\frac{4}{m^*} \right)^n\to \infty, ~~\text{for}~n\to \infty,
\]
it follows from Lemma \ref{lemma:Construction} that the length of the arc between any two exits in  $L_{\infty}$ is infinite.
By using arguments analogous to those in Example 1, one can show that the infinite length of the arc between any two exits of $L_{\infty}$ implies that the arc between any two points in the fractal is infinite, as in the case of self-similar labyrinth fractals generated by both horizontally and vertically blocked patterns \cite{laby_4x4, laby_oigemoan}.

Thus we have proven the following
\begin{proposition}
There exist  sequences $\{{\cal A}_k\}_{k=1}^{\infty}$ of (both horizontally and vertically) blocked labyrinth patterns, such that the limit set $L_{\infty}$ has the property that
 for any two points in $L_{\infty}$ the length of the arc $a\subset L_{\infty}$ that connects them is infinite.
\end{proposition}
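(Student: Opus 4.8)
The plan is to realise the scheme of Example~2 explicitly and then to transfer the infinite-length property from arcs between the four exits of $L_\infty$ to arcs between arbitrary pairs of points, reversing the direction of the bootstrapping argument used in Example~1. First I would fix a finite family of special cross patterns $\{{\cal A}'_1,\dots,{\cal A}'_r\}$, $r\ge 2$, each of width $m'_i\ge 11$ (so that such patterns exist and are both horizontally and vertically blocked), and define $\{{\cal A}_k\}_{k\ge 1}$ by letting each ${\cal A}_k$ be one of the ${\cal A}'_i$, for instance periodically. The only feature of this sequence I will use is that the widths $m_k$ take values in the finite set $\{m'_1,\dots,m'_r\}$: writing $m^*=\max_i m'_i$ we have $m_k\le m^*$ for all $k$, hence $\sum_{k\ge 1}\frac1{m_k}=\infty$ and, equivalently, $\prod_{k=1}^n\bigl(1+\frac4{m_k}\bigr)\ge\bigl(1+\frac4{m^*}\bigr)^n\to\infty$.

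Next I would show that the arc $a$ in $L_\infty$ between any two exits $e_1,e_2$ of $L_\infty$ has infinite length. By the path-matrix computation carried out above for special cross patterns, the unique path in $\mathcal{G}({\cal W}_n)$ between the two exit squares $W_n(e_1),W_n(e_2)$ of the corresponding types consists of exactly $\prod_{k=1}^n(m_k+4)$ squares. Since $a\subseteq L_\infty\subseteq L_n$ and $a$ joins $e_1\in W_n(e_1)\cap\fr([0,1]\times[0,1])$ to $e_2\in W_n(e_2)\cap\fr([0,1]\times[0,1])$, Proposition~\ref{lemma:m^n} gives, for every $n\ge 1$,
\[
\ell(a)\ \ge\ \frac{\prod_{k=1}^n(m_k+4)-1}{2\,m(n)}\ =\ \frac12\left(\prod_{k=1}^n\Bigl(1+\frac4{m_k}\Bigr)-\frac1{m(n)}\right),
\]
and letting $n\to\infty$, using the divergence above together with $m(n)\to\infty$, forces $\ell(a)=\infty$.

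Finally I would bootstrap to arbitrary points, running the argument of Example~1 in the opposite direction. Let $x\ne y$ in $L_\infty$. By Lemma~\ref{lemma:Construction} there is an $n$ for which the path $p_n$ between $W_n(x)$ and $W_n(y)$ has at least three squares; pick $W\in p_n$ with $W\notin\{W_n(x),W_n(y)\}$. By the arc construction of Lemma~\ref{lemma:Construction} (cf.\ Proposition~\ref{lemma:ArcSimilarity1}), $a(x,y)\cap W$ is a sub-arc of $a(x,y)$ joining two exits of the square $W$. Now $L_\infty\cap W$ is the image, under a similarity of ratio $m(n)$, of the mixed labyrinth fractal $L'_\infty$ generated by the shifted sequence $\{{\cal A}_{k+n}\}_{k\ge1}$, and under this similarity $a(x,y)\cap W$ corresponds to the arc in $L'_\infty$ between two exits of $L'_\infty$. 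The shifted sequence is again built from special cross patterns whose widths lie in the same finite set, so the previous paragraph applies verbatim to $L'_\infty$ and shows that this arc has infinite length; since a similarity multiplies lengths by a fixed factor, $\ell(a(x,y)\cap W)=\infty$, whence $\ell(a(x,y))\ge\ell(a(x,y)\cap W)=\infty$. As $x,y$ were arbitrary, the proposition follows.

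The step I expect to be the main obstacle is the last one: one must verify carefully that $a(x,y)\cap W$ really is a single sub-arc of $a(x,y)$ connecting two exits of $W$ — so that its length is a genuine lower bound for $\ell(a(x,y))$ — and that it is carried by the canonical similarity $L_\infty\cap W\to L'_\infty$ exactly onto the arc between the two corresponding exits of $L'_\infty$. Both points rest on the explicit description of arcs in Lemma~\ref{lemma:Construction}(e) and on Proposition~\ref{lemma:ArcSimilarity1}; granting them, the remainder is only the elementary observation that $\prod_k(1+4/m_k)$ diverges because just finitely many, and hence bounded, widths occur.
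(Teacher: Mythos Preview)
Your proposal is correct and follows essentially the same route as the paper's Example~2: choose the generating sequence from a finite stock of special cross patterns so that the widths are bounded, deduce from the path-matrix computation and Proposition~\ref{lemma:m^n} that the product $\prod_k(1+4/m_k)$ diverges and hence every exit-to-exit arc is infinite, and then lift this to arbitrary pairs $x,y$ by picking an intermediate square $W$ on $p_n(W_n(x),W_n(y))$ and invoking the scaling of $L_\infty\cap W$ to a shifted mixed labyrinth fractal. Your write-up is in fact more explicit than the paper's (which cites Lemma~\ref{lemma:Construction} somewhat loosely for the length lower bound and defers the bootstrapping to ``arguments analogous to those in Example~1''), and the obstacle you flag --- that $a(x,y)\cap W$ is a single sub-arc between two exits of $W$ --- is exactly the fact the paper uses without further comment, justified by Lemma~\ref{lemma:Construction}(e).
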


\begin{figure}[h!]
\begin{center}
\includegraphics[width=0.3\textwidth]{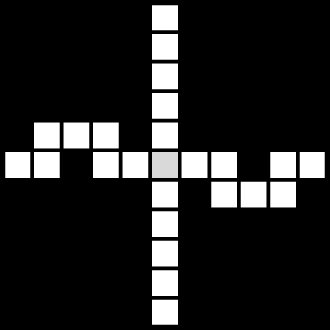}
\caption{An example: a half-blocked cross pattern ${\cal A}_1$ with width $m_1=11$ that is horizontally blocked, but not vertically blocked}
\label{fig:ex:A_1_finite_and_infinite}
\end{center}
\end{figure}

\noindent
{\bf Example 3. } 
In Figure \ref{fig:ex:A_1_finite_and_infinite} we have a ``half-blocked'' labyrinth pattern with width $11$, that is horizontally, but not vertically blocked, we call such a pattern (of width $m=2a+1\ge 11$) a \emph{half-blocked cross pattern}, where either the horizontal or the vertical arms of the cross make a detour around a black square, positioned as in the case of the special cross patterns used in Example 1, and the ``central square'' of the pattern (where all ``cross arms'' meet) is positioned in column and row $a+1$.
Suppose we have a sequence  $\{{\cal A}_k\}_{k=1}^{\infty}$ of such patterns, with $m_k=2a_k+1,$ and $a_k\ge 5$, for $k\ge 1$. 
For any pattern ${\cal A}_k$, with width $m_k=2a_k+1$, of the above sequence, the path matrix is 

\[
M_k=\left(
\begin{array}{rrllll}
2a_k+1 & 0 & 0 & 0 & 0 & 0\\
0 & 2a_k-3 & 2 & 2 & 2 & 2\\
a_k & a_k-2 & 2 & 1 & 1 & 1\\
a_k & a_k-2 & 1 & 2 & 1 & 1\\
a_k & a_k-2 & 1 & 1 & 2 & 1\\
a_k & a_k-2 & 1 & 1 & 1 & 2\\
\end{array}\right).
\]
Thus, the lengths of the paths between exits in ${\cal G}({\cal A}_k)$ are: $\A _k=2a_k+1$, $\B _k=2a_k+5$, $\C _k=\D _k=\E _k=\F _k=2a_k+3$. 
 One can immediately check that the length of the arc between the top and bottom exit of the resulting labyrinth set $L_{\infty}$ is $1$, no matter how we chose the sequence $\{ a_k\}_{k\ge 1}$. Now, let us analyse the arc  between the left and the right exit in $L_{\infty}$, and denote by $\B(n)$ the path in ${\cal G}({\cal W}_n)$ between the left and right exit in ${\cal W}_n$. If we choose, e.g., $a_k=k+4$, for $k\ge 1$, then $\sum_{k=1}^{\infty}\frac{1}{m_k}=\infty$. 

From Proposition \ref{lemma:m^n} we have, with the notation used above: for $q\in \{ \B \}  $,  
\[ \ell(\gamma^q)\ge \frac{\prod_{k=1}^n(m_k+4)-1}{2 \prod_{k=1}^n m_k}=\frac12\prod_{k=1}^n\Big(1+\frac{4}{m_k}\Big)-\frac{1}{2\prod_{k=1}^n m_k}
.
\]
 Under the above assumptions, $\displaystyle \lim_{n\to \infty}\Big(\frac12\prod_{k=1}^n\big(1+\frac{4}{m_k}\big)-\frac{1}{2\prod_{k=1}^n m_k}\Big)=\infty,$ for $n\to \infty$, as one can immediately check. Herefrom one can easily infere that also the arcs in $L_{\infty}$ that connect the top or the bottom exit of $L_{\infty}$ with the left or the right exit of $L_{\infty}$ have, all four, infinite length.

Moreover, one can show, by using arguments analogous to those mentioned when proving Proposition \ref{prop:exist_patterns_finite_arcs}, that for any $W\in {\cal W}_n$ the arc $L_{\infty}$ that connects the left and the right exit of $W$ has infinite length. This also holds for
 the arc in $L_{\infty}$ that connects the top or the bottom exit of $W$ with the left or the right exit of $W$.

\begin{proposition}
There exist sequences of horizontally blocked and not vertically blocked labyrinth patterns, such that the resulting mixed labyrinth  fractal $L_{\infty}$ has the following properties:
\begin{enumerate}
\item The arc in the fractal that connects  the top and the bottom exit of $L_{\infty}$ has finite length (equal to $1$). The arc in the fractal that connects the  top and bottom exit of any square in ${\cal W}_n$ is a vertical segment with finite length. Any vertical line segment that is contained in the fractal has finite length.
\item The arc in the fractal that connects  the left and the right exit of $L_{\infty}$ has infinite length. The arc in the fractal that connects the left and right exit of any square in ${\cal W}_n$ has infinite length.
\item The arc in the fractal that connects the exit $e_1$ and the exit $e_2$ in $L_{\infty}$, where $e_1$ is either the top or the bottom exit, and $e_2$ either the left or the right exit, has infinite length, and the same holds for the arcs between these pairs of exits in any square in ${\cal W}_n$.
\end{enumerate} 
\end{proposition}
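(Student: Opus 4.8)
The plan is to derive the three items from the path matrices $M_k$ displayed above, from Lemma~\ref{lemma:Construction}, Propositions~\ref{lemma:m^n} and~\ref{lemma:ArcSimilarity1}, and from the fact (already used before Proposition~\ref{prop:exist_patterns_finite_arcs}) that for $W\in{\cal W}_n$ the arc inside $L_\infty\cap W$ joining two exits of $W$ is the $1/m(n)$-scaled copy of the corresponding arc of the mixed labyrinth fractal generated by the tail sequence $\{{\cal A}_{k+n}\}_{k\ge1}$. For $a_k=k+4$ that tail is again a sequence of half-blocked cross patterns whose widths $m_{k+n}=2(k+n)+9$ still satisfy $\sum_{k\ge1}1/m_{k+n}=\infty$, so each statement proved for $L_\infty$ rescales to the corresponding statement for an arbitrary white square of level $n$, and I would argue only the $L_\infty$ case. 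For item~1, I would first note that the row of $M_k$ indexed by $\A$ equals $(2a_k+1,0,0,0,0,0)=(m_k,0,0,0,0,0)$, and since every $M_k$ has this first row, the first row of $M(n)=\prod_{k=1}^nM_k$ is $(m(n),0,0,0,0,0)$ as well. Hence the path $p_n$ in ${\cal G}({\cal W}_n)$ from the top to the bottom exit consists of exactly $m(n)$ squares, all of them $\A$-squares; an $\A$-square has only a top and a bottom neighbour, and by Property~\ref{prop2} the top and bottom exits of ${\cal W}_n$ lie in the same column, so $p_n$ is a single column of side-$1/m(n)$ squares and $\bigcup_{W\in p_n}W$ is a closed vertical strip of width $1/m(n)$. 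By Lemma~\ref{lemma:Construction}(e) the arc between the top and the bottom exit of $L_\infty$ is the intersection of these nested strips, that is, the vertical segment of length $1$ through the (aligned) exits; rescaling gives the claim for every $W\in{\cal W}_n$, and any line segment contained in the bounded set $L_\infty$ trivially has finite length.

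For item~2, the choice $a_k=k+4$ gives $\sum_{k\ge1}1/m_k=\infty$. Let $N_n$ denote the number of squares on the shortest path in ${\cal G}({\cal W}_n)$ from the left-exit square to the right-exit square. Since the left and right exits of $L_\infty$ lie on the outer boundary $\fr([0,1]^2)$ --- on the left edge of the left-exit square and on the right edge of the right-exit square, respectively --- the arc $a$ in $L_\infty$ joining them is, for every $n$, a curve in $L_n$ of the type considered in Proposition~\ref{lemma:m^n}, which yields $\ell(a)\ge(N_n-1)/(2m(n))$. Since $N_n/m(n)\to\infty$ (the one genuine computation, discussed below), $\ell(a)=\infty$, and rescaling gives infinite length for the left-to-right arc of every $W\in{\cal W}_n$.

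For item~3, I would use that in each row of $M_k$ indexed by $\C,\D,\E,\F$ the entry in column $\B$ equals $a_k-2\ge3$; in particular every path in ${\cal G}({\cal A}_1)={\cal G}({\cal W}_1)$ joining a vertical exit to a horizontal exit passes through at least one square $W$ that is a $\B$-square with respect to that path. For the arc $a$ in $L_\infty$ joining, say, the top exit and the right exit, Proposition~\ref{lemma:ArcSimilarity1} identifies $a\cap W$ with the arc in $L_\infty$ between the left and the right exit of $W$, and this arc has infinite length by item~2; hence $\ell(a)=\infty$. The same argument applies to the three remaining pairs of exits of $L_\infty$ and, after rescaling, to the corresponding arcs inside any $W\in{\cal W}_n$.

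The only real obstacle is the growth estimate underlying item~2: that $N_n/m(n)\to\infty$ whenever $\sum_{k\ge1}1/m_k=\infty$. The ``special cross'' shortcut of Example~1 is not available here, because the rows of $M_k$ no longer all have the same sum --- the $\A$-row sums to $m_k$, the $\B$-row to $m_k+4$, and the remaining rows to $m_k+2$ --- so $N_n$ is not literally $\prod_{k=1}^n(m_k+4)$. The approach I would take is to iterate the matrices $M_1,\dots,M_n$ on the row vector indexed by $\B$, that is, to follow the type-count vector of the left-to-right path, and to show that its $\B$-component, hence $N_n$, stays at least a fixed positive multiple of $m(n)$, so that the bound of Proposition~\ref{lemma:m^n} still diverges after division by $m(n)$. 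One must resist bounding $N_n$ merely by the number of $\B$-squares on the path, which gives only $\prod_{k=1}^n(m_k-4)/m(n)\to0$; it is the ``detour'' ($\A$-)squares manufactured at every level of the refinement that make $N_n$ grow fast enough.
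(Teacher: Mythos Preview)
Your treatment of items~1 and~3 is essentially the paper's: for item~1 you read off from the $\A$-row of $M_k$ that the top--bottom path is a single vertical column of $m(n)$ squares, and for item~3 you locate a $\B$-square on the $\C$-, $\D$-, $\E$-, or $\F$-path and invoke Proposition~\ref{lemma:ArcSimilarity1} together with item~2. The rescaling argument you use to pass from $L_\infty$ to an arbitrary $W\in{\cal W}_n$ is also the paper's.

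For item~2 you actually go further than the paper: you notice that the row sums of $M_k$ are $m_k$, $m_k+4$, $m_k+2$, so the $\B$-path in ${\cal W}_n$ does \emph{not} have length $\prod_{k=1}^n(m_k+4)$. (Indeed, already at level~2 one computes $N_2=(m_1+4)(m_2+4)-16$; the paper's displayed lower bound is therefore an overcount, not a valid application of Proposition~\ref{lemma:m^n}.) The paper simply asserts the product formula and moves on, so here you are more careful than the text you are being compared to.

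However, your proposed repair has a genuine gap. Showing that ``$N_n$ stays at least a fixed positive multiple of $m(n)$'' gives only $(N_n-1)/(2m(n))\ge c/2-o(1)$ in Proposition~\ref{lemma:m^n}, a bounded lower bound, not a divergent one; so this cannot yield $\ell=\infty$. Your final remark that the ``detour ($\A$-)squares manufactured at every level'' are what push $N_n$ up is also off the mark: $\A$-squares reproduce with the \emph{smallest} factor $m_k$, so once created they contribute nothing to $N_n/m(n)$.

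What actually works is an exact identity hidden in the recursion. Writing the $\B$-row of $M(n)$ as $(A_n,B_n,C_n,D_n,E_n,F_n)$ and $G_n=C_n+D_n+E_n+F_n$, the relations
\[
B_n=(m_n-4)B_{n-1}+(a_n-2)G_{n-1},\qquad G_n=8B_{n-1}+5G_{n-1}
\]
combine to give $2B_n+G_n=m_n\,(2B_{n-1}+G_{n-1})$, whence $2B_n+G_n=2m(n)$ for all $n$ (check $2B_1+G_1=2(2a_1-3)+8=2m_1$). Since $N_n=m_nN_{n-1}+4B_{n-1}+2G_{n-1}$, this yields
\[
\frac{N_n}{m(n)}-\frac{N_{n-1}}{m(n-1)}=\frac{2(2B_{n-1}+G_{n-1})}{m(n)}=\frac{4}{m_n},
\qquad\text{so}\qquad \frac{N_n}{m(n)}=1+4\sum_{k=1}^n\frac{1}{m_k}.
\]
Thus Proposition~\ref{lemma:m^n} gives $\ell(\gamma^{\;\B})\ge \tfrac12+2\sum_{k=1}^n 1/m_k-\tfrac{1}{2m(n)}\to\infty$ precisely when $\sum 1/m_k=\infty$, which is the hypothesis you (and the paper) use with $a_k=k+4$. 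This closes the gap and, incidentally, replaces the paper's incorrect product by a correct closed form.
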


The corresponding analogous statements regarding the existence of sequences of vertically and not horizontally blocked labyrinth patterns hold, such that $L_{\infty}$ has the corresponding analogous properties.
\par
For the above sequence of half-blocked cross patterns the length of the arc in $L_{\infty}$ that connects two arbitrarily chosen distinct points $x,y \in L_{\infty}$ has finite length if and only if it is contained in a vertical line segment which is, itself, a subset of $L_{\infty}$.

\vspace{2mm}
\par \noindent
{\bf Some final remarks.} 
In the case of mixed, i.e., not self-similar labyrinth fractals not just the shape of the patterns but also their width plays an essential role as a parameter that influences the lengths of arcs between exits or between any points in the fractal.
\\\\
{\bf Conjecture:}  A sequence of both horizontally and vertically blocked labyrinth patterns with the property that the sequence of widths $\{m_k\}_{k\ge 1}$ is bounded, generates a mixed labyrinth fractal with the property that for any $x,y \in L_{\infty}$ the length of the arc in the fractal that connects $x$ and $y$ is infinite.
\\\\{\bf Acknowledgement.} The authors thank Bertran Steinsky for valuable remarks on the manuscript.  We thank the referee for helpful comments.


\begin{thebibliography}{20}
\bibitem{AnhHoffmanSeegerTarafdar2005} D.\,H.\,N. Anh, K.\,H. Hoffmann, S. Seeger, S. Tarafdar, \emph{Diffusion in disordered fractals}, Europhys. Lett. {\bf 70} (1) (2005), 109--115
\bibitem{laby_4x4} L.\,L. Cristea, B. Steinsky, \emph{Curves of Infinite Length in $4\times 4$-Labyrinth Fractals}, Geometriae Dedicata, Vol. 141, Issue 1 (2009), 1--17
\bibitem{laby_oigemoan} L.\,L. Cristea, B. Steinsky, \emph{Curves of Infinite Length in Labyrinth-Fractals}, Proceedings of the Edinburgh Mathematical Society Volume 54, Issue 02 (2011), 329--344
\bibitem{mixlaby}L.\,L. Cristea, B. Steinsky, \emph{Mixed labyrinth fractals}, Manuscript, submitted for publication (2014)
\bibitem{Falconer_book}  K.\,J. Falconer, \emph{Fractal geometry, Mathematical Foundations and Applications}, John Wiley \& Sons, Chichester, 1990
\bibitem{GrachevPotapovGerman2013} V.\,I. Grachev, A.\,A. Potapov, V.\,A. German, \emph{Fractal Labyrinths and Planar Nanostructures}, 
PIERS Proceedings, Stockholm, Sweden, Aug. 12-15, 2013 
\bibitem{Hilbert} D. Hilbert, \emph{\"Uber die stetige Abbildung einer Linie auf ein Fl\"achenst\"uck}, Math. Annalen {\bf 38} (1891), 459--460 
\bibitem{Kuratowski} K. Kuratowski, Topology, Volume II, Academic Press, New York and London, 1968
\bibitem{Peano} G. Peano, \emph{Sur une courbe, qui remplit une aire plane}, Math. Ann. \emph{36}, 157-160, 1890
\bibitem{PotapovGrachev2012} A.\,A. Potapov, V.\,I. Grachev, \emph{Fractal Labyrints: Physics and Fractional Operators}, 
PIERS Proceedings, Moscow, August 19-23, 2012 
\bibitem{PotapovGermanGrachev2013}  A.\,A. Potapov, V.\,A. German, V.\,I. Grachev, \emph{``Nano'' and radar signal processing: Fractal reconstruction complicated images, signals and radar backgrounds based on fractal labyrinths}, Conf. Radar Symposium (IRS) Proceedings, 2013, Vol. 2
\bibitem{PotapovZhang2016} A.\,A. Potapov, W. Zhang, \emph{Simulation of New Ultra-Wide Band Fractal Antennas Based on Fractal Labyrinths}, Proceedings of the 2016 CIE International Conference on Radar, Guangzhou, Oct 10-12, 2016, 319--323
\bibitem{SeegerHoffmannEssex2009_randomKoch} S. Seeger, K.\,H. Hoffmann, C. Essex, \emph{Random Walks on random Koch curves}, J. Phys. A: Math. Theor. {\bf 42} (2009) 225002 (12 pages) 
\bibitem{Tarafdar_modelporstructrepeatedSC2001} S. Tarafdar, A. Franz, C. Schulzsky, K.\,H. Hoffmann, \emph{Modelling porous structures by repeated Sierpi\'nski carpets}, Physica A {\bf 292} (2001), 1--8
\bibitem{Tricot} C. Tricot, Curves and Fractal Dimension, Springer-Verlag, Paris, 1993
\bibitem{vonKoch1904} H. von Koch, \emph{Sur une courbe continue sans tangente, obtenue par une construction g\'eom\'etrique \'el\'ementaire}, Ark. Mat. {\bf 1} (1904) ,681--702
\bibitem{vonKoch1906} H. von Koch, \emph{Une m\'ethode g\'eom\'etrique \'el\'ementaire pour l'\'etude de certaines questions de la th\'eorie des courbes planes}, Acta. Math. {\bf 30} (1906) ,145--174
\end{thebibliography}
\end{document}